\providecommand{\keywords}[1]
{
	\small	
	\textbf{\textit{Keywords. }} #1
}
\providecommand{\ams}[1]
{
	\small	
	\textbf{\textit{AMS subject classifications. }} #1
}
\newlength\figureheight 
\newlength\figurewidth
\newlength\mysvgwidth
\newlength\boxwidth
\pgfplotsset{compat=newest,grid style={dotted,black}}
\newcommand{\R}{{\mathbb{R}}}
\newcommand{\N}{{\mathbb{N}}}
\newcommand{\E}{{\mathbb{E}}}
\newcommand{\pP}{{\mathbb{P}}}
\renewcommand{\vec}[1]{\boldsymbol{#1}}
\newcommand{\Deriv}{\mathrm{d}_{\vec{u}}}
\DeclareMathOperator{\Divv}{div}
\newcommand{\Div}[1]{\Divv{#1}}
\DeclareMathOperator{\vol}{vol}
\DeclareMathOperator{\peri}{peri}
\DeclareMathOperator{\dist}{dist}
\renewcommand{\d}{\,\mathrm{d}}
\newcommand{\dx}{\,\mathrm{d} \vec{x}}
\newcommand{\ds}{\,\mathrm{d} \vec{s}}
\newcommand{\vx}{\vec{x}}
\newcommand{\vl}{\vec{\lambda}}
\newcommand{\vh}{\vec{h}}
\newcommand{\vw}{\vec{w}}
\newcommand{\velocity}{\vec{q}}
\newcommand{\pressure}{p}
\newcommand{\adjvelocity}{\vec{\varphi}}
\newcommand{\adjpressure}{\psi}
\newcommand{\vn}{\vec{n}}
\newcommand{\vu}{\vec{u}}
\newcommand{\vV}{\vec{V}}
\newcommand{\vK}{\vec{K}}
\newcommand{\cU}{\mathcal{U}}
\newcommand{\cG}{\mathcal{G}}
\definecolor{shape1}{RGB}{27,158,119}
\definecolor{shape2}{RGB}{217,95,2}
\definecolor{shape3}{RGB}{117,112,179}
\newcommand{\GD}{\varGamma_D}
\newcommand{\GN}{\varGamma_N}
\newcommand{\intD}[1]{\int_{D} #1 \dx}
\newcommand{\intDi}[1]{\int_{D_i} #1 \dx}
\newcommand{\intui}[1]{\int_{u_i} #1 \ds}
\newenvironment{data}
{\par\footnotesize}
{\par\addvspace{\bigskipamount}}
\newenvironment{acknowledgements}
{\par\footnotesize}
{\par\addvspace{\bigskipamount}}
\newcommand{\new}[1]{#1}
\newcommand{\rename}[1]{#1}
\newtheorem{theorem}{Theorem}[section]
\newtheorem{ass}{Assumption}
\newtheorem{definition}{Definition}[section]
\newtheorem{lemma}{Lemma}[section]
\newtheorem*{remark}{Remark}
\begin{document}
	
	\title{Stochastic Augmented Lagrangian Method in \new{Riemannian Shape Manifolds}}
	
	
	\author{Caroline Geiersbach}
	\affil{Weierstrass Institute, Mohrenstraße 39, 10117 Berlin, Germany
		\href{mailto:caroline.geiersbach@wias-berlin.de}{\ttfamily caroline.geiersbach@wias-berlin.de}\vspace*{6pt}}
	
	\author{Tim Suchan}
	\affil{Helmut-Schmidt-Universität / Universität der Bundeswehr Hamburg, Holstenhofweg 85, 22043 Hamburg, Germany, \href{mailto:suchan@hsu-hh.de}{\ttfamily suchan@hsu-hh.de}\vspace*{6pt}}
	
	\author{Kathrin Welker}
	\affil{Technische Universität Bergakademie Freiberg, Akademiestraße 6, 09599 Freiberg, Germany, \href{mailto:Kathrin.Welker@math.tu-freiberg.de}{\ttfamily Kathrin.Welker@math.tu-freiberg.de}}
	
	
	\maketitle
	
	\begin{abstract}
		In this paper, we present a stochastic augmented Lagrangian approach on (possibly infinite-dimensional)  Riemannian manifolds to solve sto\-chas\-tic optimization problems with a finite number of deterministic constraints.
		We investigate the convergence of the method, which is based on a stochastic approximation approach with random stopping combined with an iterative procedure for updating Lagrange multipliers.  The algorithm is applied to a multi-shape optimization problem with geometric constraints and demonstrated numerically.
	\end{abstract}
	
\keywords{augmented Lagrangian, stochastic optimization, uncertainties, inequality constraints, Riemannian manifold, shape optimization, geometric constraints}
\newline
\ams{
	49Q10
	,
	60H35
	,
	35R15
	,
	49K20
	,
	41A25
	,
	60H15
	, 
	60H30
	,
	35R60
}
	
	\section{Introduction} \label{sec:intro}
	
	In this paper, we  concentrate on stochastic optimization problems of the form
	\new{
		\begin{equation}\tag{P}
			\label{eq:SO-problem-abstract}
			\begin{aligned}
				&\min_{u \in \mathcal{U}}\, \lbrace j(u):=\E[{J}(u,\vec{\xi})] = \int_\Omega J(u, \vec{\xi}(\omega)) \d \pP(\omega)\} \\
				&\text{subject to (s.t.)} \quad h_i(u) = 0 \quad i \in \mathcal{E}, \quad h_i(u) \leq 0 \quad i \in \mathcal{I}. 
			\end{aligned}
		\end{equation}
		Here, $\mathcal{U}$ is a Riemannian manifold and $\vec{\xi} \colon \Omega \rightarrow \Xi \subset \R^m$ is a random vector defined on a given probability space.
		We assume that we have deterministic constraints of the form $\vec{h}\colon\mathcal{U} \rightarrow \R^n$, $u \mapsto \vec{h}(u) = (h_1(u), \dots, h_n(u))^\top$}, where we distinguish between the index set $\mathcal{E}$ of equality constraints and the index set $\mathcal{I}$ of inequality constraints.

	Our investigations are motivated by applications in shape optimization, where an objective function is supposed to be minimized with respect to a shape, or a subset of $\R^{d}$.
	Finding a correct model to describe the set of shapes is one of the main challenges in shape optimization.
	From a theoretical and computational point of view, it is attractive to optimize in Riemannian manifolds because algorithmic ideas from \cite{Absil} can be combined with approaches from differential geometry as outlined in \cite{LoayzaGeiersbachWelkerHandbook}.
	\new{This is one of the main reasons why we focus on Riemannian manifolds in this paper. One needs to take into account that these Riemannian manifolds could be also infinite dimensional, e.g., the space of plane curves \cite{MichorMumford1,MichorMumford,MioSrivastavaJoshi,MichorMumfordShahYounes}, the space of piecewise-smooth curves 
		\cite{Pryymak2023}, and  the space of surfaces in higher dimensions 
		\cite{BauerHarmsMichor,BauerHarmsMichor_SobolevII,KMP,KurtekKlassenDingSrivastava,MichorMumford2}.
	}
	Often, more than one shape needs to be considered, which leads to so-called multi-shape optimization problems. As applications, we can mention electrical impedance tomography, where the material distribution of electrical properties such as electric conductivity and permittivity inside the body is examined \cite{Cheney1999,Kwon2002,laurain2016distributed}, 
	and the optimization of biological cell composites in the human skin \cite{SiebenbornNaegel,Siebenborn2017}.
	
	\new{If one focuses on one-dimensional shapes, the above-mentioned space of plane unparametrized curves is a prominent example of an infinite-dimensional manifold. In our numerical application (cf. Section \ref{sec:numericalResults}), we also focus on this shape space. Our choice of this space comes from the fact that in shape optimization, the set of permissible shapes generally does not allow a vector space structure. 
		One should note that there is no obvious distance measure without a vector space structure, which is a central difficulty in the formulation of efficient optimization methods. 
		If one cannot work in vector spaces, Riemannian shape manifolds are the next best option, but they come with additional difficulties; see Section~\ref{subsec:convergence}. 
	}

	A central difficulty in \eqref{eq:SO-problem-abstract} is that the constraints lead to a  stochastic optimization problem that cannot be handled using standard techniques such as gradient descent or Newton's method; additionally, the numerical solution of the problem may be intractable on account of the expectation. In this work, we propose a stochastic augmented Lagrangian method to solve problems of the form \eqref{eq:SO-problem-abstract}. The proposed method combines the smoothing properties of the augmented Lagrangian method with a reduction in complexity granted by stochastic approximation.

	The augmented Lagrangian method has been extensively studied; see \cite{Bertsekas1982,Birgin2014} for an introduction to the method \new{when $\mathcal{U} = \R^n$}. 
	Substantial theory can be found in the literature for PDE-constrained optimization, \new{which is related to our setting in PDE-constrained shape optimization and} where convergence has been studied in function spaces; see \cite{kanzow2018error,kanzow2019improved,kanzow2018augmented,Karl2018,Steck2018}. This theory does not apply even for deterministic counterparts of \eqref{eq:SO-problem-abstract} since \new{our control variable $u$} belongs to a Riemannian manifold, not a Banach space. \new{The study of constrained optimization on Riemannian manifolds is still nascent. There are relatively recent advances in first-order optimality conditions in KKT form, including the development of constraint qualifications analogous to the finite-dimensional setting \cite{bergmann2019intrinsic,yang2014}. The augmented Lagrangian method has recently been developed for Riemannian manifolds \cite{kovnatsky2016madmm,liu2020simple,yamakawa2022sequential}. These methods have been developed for deterministic problems, however, and therefore cannot be applied to problems of the form \eqref{eq:SO-problem-abstract}.}%
	
	Stochastic approximation is a class of algorithms that originated from the paper \cite{Robbins1951} and has developed in recent decades due to its applicability to high-dimensional stochastic optimization problems. \new{Thanks in part to applications in machine learning, these algorithms are increasingly being developed in the setting of Riemannian optimization; see, e.g., \cite{bonnabel2013stochastic,khuzani2017stochastic,sato2019riemannian,zhang2016riemannian,zhang2016first}.} The most basic algorithm is the stochastic gradient method, which can be used to solve an unconstrained version of (P), i.e., the problem of minimizing the expectation.
	Recently, the stochastic gradient method was proposed to handle PDE-constrained shape optimization problems  \cite{Geiersbach2021,LoayzaGeiersbachWelkerHandbook}. In \cite{Geiersbach2021}, asymptotic convergence was proven for optimization variables belonging to a Riemannian manifold and the connection was made to shape optimization following the ideas in \cite{Welker2016}. However, the stochastic gradient method cannot solve  problems of the form \eqref{eq:SO-problem-abstract}.

	While both augmented Lagrangian and stochastic approximation methods are well-developed, the combined method---what we call the stochastic augmented Lagrangian method---is not. In the context of training neural networks, a combined stochastic gradient/augmented Lagrangian approach in the same spirit as ours can be found in the paper \cite{Dener2020}. Our method, however, involves a novel use of the randomized multi-batch stochastic gradient method from \cite{Ghadimi2013,Ghadimi2016}, where a random number of stochastic gradient steps are chosen. We use this strategy to solve the inner loop optimization problem for fixed Lagrange multipliers and penalty parameters. A central consequence of the random stopping rule from \cite{Ghadimi2013,Ghadimi2016} is that convergence rates of the expected value of the norm of the gradient can be obtained, even in the nonconvex case. The random stopping rule in combination with an outer loop procedure can be used to adaptively adjust step sizes and batch sizes for a tractable algorithm where asymptotic convergence to stationary points of the original problem is guaranteed. 
	
	The paper is structured as follows. In Section \ref{sec:optimization-approach}, we present the stochastic augmented Lagrangian method for optimization on Riemannian manifolds and analyze its convergence.   
	Then, an application for our method is introduced and results of numerical tests are presented in Section \ref{sec:numericalResults}. \new{To conclude, we summarize our results in Section \ref{sec:conclusion}.}

	\section{Optimization Approach}
	\label{sec:optimization-approach}
	In this section, we introduce the stochastic augmented Lagrangian method  for Riemannian manifolds. In view of our later application to shape optimization, where convexity of the objective function $j$ cannot be expected, we focus on providing results for the nonconvex case. First, in Section \ref{subsec:background}, we will provide background material that will be of use in our analysis. 
	\new{In particular, definitions and theorems from differential topology and geometry that are required in this paper will be provided. For background details, we refer to, e.g., \cite{KrieglMichor,Kuehnel,Lang,Lee} for differential geometry and \cite{Gut2013} for probability theory.}
	The algorithm is presented in Section \ref{subsec:algorithm}. Convergence of the method is proven in two parts: in Section \ref{subsec:convergence}, we provide an efficiency estimate for the inner loop procedure, corresponding to a randomized multi-batch stochastic gradient method. Then, in Section \ref{subsec:outer-loop}, convergence rates with respect to the outer loop procedure, which corresponds to a stochastic augmented Lagrangian method, are given.

	\subsection{Background and Notation}
	\label{subsec:background}
	
	

	
	\new{We consider the Euclidean norm $\lVert \cdot\rVert_2$ on $\R^n$ throughout the paper.}  
	For a differentiable Riemannian manifold \rename{$(\mathcal{U},\mathcal{G})$, $\mathcal{G}=(\mathcal{G}_u)_{u\in \mathcal{U}}$} denotes the  Riemannian metric. 
	The induced norm is denoted by \rename{$\lVert \cdot \rVert_{\mathcal{G}} := \sqrt{\mathcal{G}(\cdot,\cdot)}$}. 
	\new{
		The tangent of space of $\mathcal{U}$ at a point $u \in \mathcal{U}$ is defined in its geometric version as 
		$$T_u\mathcal{U}=\{ c\colon \R \rightarrow \mathcal{U}\mid c\text{ differentiable}, c(0) =u\}/\sim,$$
		where the equivalence relation for two differentiable curves  $c,\tilde{c}\colon\R \rightarrow \mathcal{U}$  with $c(0) = \tilde{c}(0) =u$ is defined as follows:
		$c \sim \tilde{c} \Leftrightarrow \tfrac{\d}{\d t}\phi_{\alpha}(c(t))\vert_{t=0}  =\tfrac{\d}{\d t} \phi_{\alpha}(\tilde{c}(t))\vert_{t=0}$ for all $\alpha$ with $u \in U_\alpha$, where $\{(U_\alpha, \phi_\alpha)\}_\alpha \text{ is an  atlas of }\mathcal{U}.$
	}
	The derivative of a \new{smooth} mapping $f\colon \rename{\mathcal{U}}\rightarrow \rename{\widetilde{\mathcal{U}}}$ between two differentiable manifolds \rename{$\mathcal{U}$} and \rename{$\widetilde{\mathcal{U}}$} is defined using the pushforward.  In a point $u\in \rename{\mathcal{U}}$, it is defined by $(f_*)_u \colon T_u \rename{\mathcal{U}} \rightarrow T_{f(u)} \rename{\widetilde{\mathcal{U}}}$ with $(f_*)_u(c):= \frac{\mathrm{d}}{\mathrm{d}t} f(c(t))|_{t=0} = (f \circ c)'(0),$ where
	$c\colon I\subset \R \to \rename{\mathcal{U}}$ is a differentiable curve \new{with $c(0)=u$ and $c'(0)\in T_u \rename{\mathcal{U}}$}. In particular, $f\colon \rename{\mathcal{U}} \to \rename{\widetilde{\mathcal{U}}}$ is called $\mathcal{C}^k$ if $\psi_\beta \circ f\circ \phi_\alpha^{ -1}$ is $k$-times continuously differentiable for all charts $(U_\alpha,\phi_\alpha)$ of $\rename{\mathcal{U}}$ and $(V_\beta,\psi_\beta)$ of $\rename{\widetilde{\mathcal{U}}}$ with  $f(U_\alpha)\subset V_\beta$. 
	In the case $\rename{\widetilde{\mathcal{U}}}=\R$, a Riemannian gradient $\nabla f(u) \in T_u \rename{\mathcal{U}}$ is defined by the relation
	\begin{equation} 
		\label{DefGradient}
		(f_\ast)_u w
		= g_u(\nabla f(u), w) \quad \forall w \in T_u \rename{\mathcal{U}}.
	\end{equation}
	
	\new{
		We define $V_u\coloneqq\{v\in T_u\mathcal{U}\colon 1\in I_{u,v}^\mathcal{U}\}$ with $
		I_{u,v}^\mathcal{U}\coloneqq \bigcup\limits_{I\in\tilde{I}_{u,v}^\mathcal{U}} I $, where
		\begin{equation*}
			\begin{split}
				\tilde{I}_{u,v}^\mathcal{U}\coloneqq \{I\subset \R \colon & I \text{ open, }0\in I\text{, there exists a geodesic } c\colon I\to \mathcal{U} \\
				&\text{satisfying }c(0)=u\in \mathcal{U},\, c'(0)=v\in T_u\mathcal{U} \}.
			\end{split}
		\end{equation*}
		Then, we denote the exponential mapping by
		\begin{equation*}
			\exp\colon \bigcup\limits_{u\in \mathcal{U}}\{u\}\times V_u \to \mathcal{U},\ (u,v)\mapsto \exp_u(v)\coloneqq c(1),
		\end{equation*}
		where  $\exp_u(v)$ is the exponential map of $\mathcal{U}$ at $U$, which assigns to every tangent vector $v\in V_u $ the point $c(1)$ and $c\colon I_{u,v}^\mathcal{U}\to U$ is the unique geodesic satisfying $c(0)=u$ and $c'(0)=v$.}
	
	Let the length of a $\mathcal{C}^1$-curve $c\colon [0,1]\to \rename{\mathcal{U}}$ be denoted by $\textup{L}(c) = \int_0^1 \|c'(t)\|_g\d t$. 
	Then the distance $\mathrm{d}\colon \rename{\mathcal{U}} \times \rename{\mathcal{U}} \rightarrow \R$ between points $u,q\in \rename{\mathcal{U}}$  is given by
	\begin{align*}
		\mathrm{d}(u,q) 
		= \inf \{\textup{L}(c) \colon &  c\colon [0,1]\to \rename{\mathcal{U}}  \text{ is a piecewise smooth curve}\\ 
		&\text{with } c(0)=u \text{ and } c(1)=q\}.
	\end{align*}
	The injectivity radius $i_u$ at a point $u \in \rename{\mathcal{U}}$ is defined as $$i_u := \sup\{r > 0 \colon \exp_u \vert_{B_r(0_u)} \text{ is a diffeomorphism}\},$$where $0_u$ denotes the zero element of $T_u \rename{\mathcal{U}}$ and $B_r(0_u) \subset T_u \rename{\mathcal{U}}$ is a ball centered at $0_u\in T_u \rename{\mathcal{U}}$ with radius $r$. The injectivity radius of the manifold $\rename{\mathcal{U}}$ is the number $i(\rename{\mathcal{U}}) := \inf_{u \in \rename{\mathcal{U}}} i_u.$ 
	
	The triple $(\Omega, \mathcal{F}, \pP)$ denotes a (complete) probability space, where $\mathcal{F} \subset 2^{\Omega}$ is the $\sigma$-algebra of events and $\pP\colon \Omega \rightarrow [0,1]$ is a probability measure. The expectation of a random variable $X\colon \Omega \rightarrow \R$ is defined by $\E[X] = \int_\Omega X(\omega)\d \pP(\omega)$. A filtration is a sequence $\{ \mathcal{F}_n\}$ of sub-$\sigma$-algebras of $\mathcal{F}$ such that $\mathcal{F}_1 \subset \mathcal{F}_2 \subset \cdots \subset \mathcal{F}$. If for an event $F \in \mathcal{F}$ it holds that $\pP(F) = 1$, then we say $F$ occurs almost surely (a.s.).
	Given an integrable random variable $X \colon \Omega \rightarrow \R$ and a sub-$\sigma$-algebra $\mathcal{F}_n$, the conditional expectation is denoted by $\E[X | \mathcal{F}_n]$, which is a random variable that is $\mathcal{F}_n$-measurable and satisfies $\int_A \E[X|\mathcal{F}_n](\omega) \d \pP(\omega) = \int_A X(\omega) \d \pP(\omega)$ for all $A \in \mathcal{F}_n$.
	
	We will frequently use the convention $\vec{\xi} \in \Xi$ to denote a realization (i.e., the deterministic value $\vec{\xi}(\omega) \in \Xi$ for some $\omega$) of the vector $\vec{\xi}\colon \Omega \rightarrow \Xi \subset \R^m$; based on the context, there should be no confusion as to whether a realization or a random vector is meant.
	Let $J\colon \new{\mathcal{U}} \times \R^m \rightarrow \R$ be a parametrized objective as in problem~\eqref{eq:SO-problem-abstract} and define $J_{\vec{\xi}}:=J(\cdot,\vec{\xi})$. The gradient  $\nabla_{\new{u}} J(\new{u},\vec{\xi}):=\nabla J_{\vec{\xi}}(\new{u})$ of $J$ with respect to $\new{u}$ is defined by the relation
	\begin{equation}
		\label{eq:parametrized-gradient}
		((J_{\vec{\xi}})_\ast)_{\new{u}} \new{w}
		= \new{\mathcal{G}}_{\new{u}} (\nabla_{\new{u}} J(\new{u},\vec{\xi}),\new{w}) \quad \forall\new{w}\in T_{\new{u}} \new{\mathcal{U}}.
	\end{equation}
	Following~\cite{Geiersbach2021}, if $\nabla_{\new{u}} J\colon \new{\mathcal{U}} \times \R^m \rightarrow T_{\new{u}} \new{\mathcal{U}}$ is $\pP$-integrable, equation (\ref{eq:parametrized-gradient}) is fulfilled for all $\new{u}$ almost surely, and $\E[\nabla_{\new{u}} J(\new{u},\vec{\xi})] = \nabla j(\new{u})$, we call $\nabla_{\new{u}} J$ a stochastic gradient. 

	Let the Lagrangian for problem (\ref{eq:SO-problem-abstract}) be the mapping $\mathcal{L}\colon \new{\mathcal{U}} \times \R^n \rightarrow \R$ defined by
	\begin{equation*}
		\mathcal{L}(\new{u}, \vl):=  j(\new{u}) +  \vec{\lambda}^\top \vec{h}(\new{u}).
	\end{equation*} 
	The gradient $\nabla h_i(\new{u}) \in T_{\new{u}} \new{\mathcal{U}}$ of $h_i \colon \new{\mathcal{U}} \rightarrow \R$ is defined by the relation $((h_i)_{*})_{\new{u}} \new{w} = \new{\mathcal{G}}(\nabla h_i(\new{u}),\new{w})$ for all $\new{w} \in T_{\new{u}} \new{\mathcal{U}}.$
	The gradient of the corresponding vector $\vec{h}\colon \new{\mathcal{U}} \rightarrow \R^n$ is the vector $\nabla \vec{h} (\new{u})= (\nabla h_1(\new{u}), \dots, \nabla h_n(\new{u}))^\top.$
	
	In the following, we define a  \emph{Karush--Kuhn--Tucker} (KKT) point.

	\begin{definition}
		The pair $(\hat{\new{u}},\hat{\vec{\lambda}})\in \mathcal{U} \times \R^n$ is called a \textit{KKT point} for problem (\ref{eq:SO-problem-abstract}) if it satisfies the following conditions:
		\begin{subequations}
			\label{eq:KKT-conditions}
			\begin{align}
				\nabla j(\hat{\new{u}})+ \sum_{i=1}^n \hat{\lambda}_i \nabla h_i({\hat{\new{u}}})&= 0_{\hat{\new{u}}}, \label{eq:KKT-conditions-1}\\
				h_i(\hat{\new{u}}) &=0, \quad \forall i \in \mathcal{E},\label{eq:KKT-conditions-2}\\
				h_i(\hat{\new{u}}) \leq 0, \quad {\hat{\lambda}_i} \geq 0, \quad {{\hat{\lambda}}_i} h_i(\hat{\new{u}}) &= 0, \quad \forall i \in \mathcal{I}. \label{eq:KKT-conditions-3}
			\end{align}
		\end{subequations}
	\end{definition}
	
	\new{
		\begin{remark}
			In order for the above-formulated KKT conditions to be necessary optimality conditions for problem (\ref{eq:SO-problem-abstract}), \new{certain constraint qualifications are required}. 
			Analogues of linear independence (LICQ), Mangasarian-Fromovitz, Abadie, and Guignard constraint qualifications have only recently been treated in finite-dimensional manifolds; see \cite{bergmann2019intrinsic,yang2014}. The investigation of proper constraint qualifications for the infinite dimensional setting is an open area of research and is not further pursued in this paper. In Theorem~\ref{theorem:convergence-properties}, we will see that in certain cases, our method produces KKT points in the limit. However, in the absence of constraint qualifications, it can  only be shown that certain asymptotic KKT conditions (AKKT) are satisfied, in general. This is discussed in more detail in Section~\ref{subsec:outer-loop}.
		\end{remark}
	}
	
	The closed cone corresponding to the constraints, the  distance to the cone, and the projection are defined, respectively, by
	\begin{align*}
		\vec{K} & := \{ \vec{y} \in \R^n: y_i=0 \,\, \forall i \in \mathcal{E},  y_i \leq 0 \,\, \forall i \in \mathcal{I}\}, \\
		\operatorname{dist}_{\vec{K}}(\vec{y})&:=\inf_{\vec{k} \in {\vec{K}}} \lVert \vec{y}-\vec{k}\rVert_2 \quad \text{and} \quad 
		\pi_{\vec{K}}(\vec{y})  := \operatorname*{argmin}_{\vec{k} \in {\vec{K}}}\lVert \vec{y}-\vec{k}\rVert_2.
	\end{align*}
	For $y \in \R$, the projection \new{onto the $i$th component of the closed cone $\vec{K}$} has the formula $\pi_{K_i}(y) = 0$ if $i \in \mathcal{E}$, and $\pi_{K_i}(y) = \min(0,y)$ if $i \in \mathcal{I}$. We have $\pi_{\vec{K}}(\vec{y}) = (\pi_{K_1}(y_1), \dots, \pi_{K_n}(y_n))^\top.$ The normal cone of $\vK$ in a point $\vec{s} \in \vK$ is defined by $N_{\vK}(\vec{s})= \{ \vec{v} \in \R^n\colon  \vec{v}^\top (\vec{s}- \vec{y}) \geq 0 \,\, \forall \vec{y} \in \vK \}$; the normal cone is the empty set if $\vec{s}$ is not contained in $\vK$.
	To define the augmented Lagrangian, we first introduce a slack variable $\vec{s} \in {\vec{K}}$ to obtain the equivalent, equality-constrained problem
	\begin{equation*}
		\begin{aligned}
			&\min_{(\new{u},\vec{s}) \in \new{\mathcal{U}}\times {\vec{K}}} \, \{  j(\new{u})=\E[J(\new{u},\vec{\xi})] \} \quad \text{s.t.} \quad \vec{h}(\new{u})-\vec{s} = \vec{0}. 
		\end{aligned}
	\end{equation*}
	The corresponding augmented Lagrangian for a fixed parameter $\mu$ is the mapping  $\mathcal{L}_A^{\vec{s}}\colon \new{\mathcal{U}} \times \R^n \times \R^n \rightarrow \R$ defined by
	\begin{align*}
		\mathcal{L}_A^{\vec{s}}(\new{u}, \vec{s},\vec{\lambda};\mu) &= j(\new{u}) +  \vec{\lambda}^\top( \vec{h}(\new{u})-\vec{s}) + \frac{\mu}{2} \lVert \vec{h}(\new{u})-\vec{s}\rVert_2^2 \\& =j(\new{u}) + \frac{\mu}{2} \Big\lVert \vec{h}(\new{u})+ \frac{\vec{\lambda}}{\mu} - \vec{s} \Big\rVert_2^2 - \frac{\lVert \vec{\lambda}\rVert_2^2}{2\mu}.
	\end{align*}
	Notice that 
	$\min_{\vec{s} \in \vec{K}} \lVert \vec{h}(\new{u}) + \tfrac{\vec{\lambda}}{\mu} - \vec{s}\rVert_2^2 = \operatorname{dist}_{\vec{K}}(\vec{h}(\new{u})+\tfrac{\vec{\lambda}}{\mu})^2.$
	Hence, it is possible to eliminate the slack variable to obtain, again for fixed $\mu$, the augmented Lagrangian $\mathcal{L}_A\colon \new{\mathcal{U}} \times \R^n \rightarrow \R$ defined by
	
	\begin{equation*}
		\mathcal{L}_A(\new{u}, \vec{\lambda};\mu) = j(\new{u}) + \frac{\mu}{2}\operatorname{dist}_{\vec{K}}\left(\vec{h}(\new{u})+ \frac{\vec{\lambda}}{\mu}\right)^2 - \frac{\lVert \vec{\lambda}\rVert_2^2}{2\mu}.
	\end{equation*}

	\subsection{Augmented Lagrangian Method on Riemannian Manifolds}
	\label{subsec:algorithm}
	In this section, we present Algorithm \ref{alg:algorithmAugLagr}, which relies on stochastic approximation. For this, we need the function $L_A \colon \new{\mathcal{U}} \times \R^n \times \Xi \rightarrow \R$ defined by
	\begin{equation*}
		L_A(\new{u},\vec{\lambda}, \vec{\xi};\mu):= J(\new{u},\vec{\xi})+  \frac{\mu}{2}\operatorname{dist}_{\vec{K}}\left(\vec{h}(\new{u})+ \frac{\vec{\lambda}}{\mu}\right)^2 - \frac{\lVert \vec{\lambda}\rVert_2^2}{2\mu}.
	\end{equation*}

	The stochastic augmented Lagrangian (AL) method is shown in Algorithm~\ref{alg:algorithmAugLagr}. The inner loop is an adaptation of the randomized mini-batch stochastic gradient (RSG) method from \cite{Ghadimi2016}. In deterministic AL methods, the inner loop is in practice only solved up to a given error tolerance, leading to an \textit{inexact} augmented Lagrangian method. Deterministic termination conditions for the inner loop typically rely on conditions of the following type: $\new{u}^{k+1}$ is chosen as the first point of the corresponding iterative procedure satisfying 
	\begin{equation*}
		\nabla_{\new{u}} \mathcal{L}_A(\new{u}^{k+1},\vec{w}^k; \mu_k) =\varepsilon_k
	\end{equation*}
	with the error disappearing asymptotically, i.e., $\varepsilon_k \rightarrow 0$ as $k \rightarrow \infty$.
	Stochastic methods like the kind used here can only provide probabilistic error bounds; termination conditions are based on a priori estimates and result in stochastic errors.
	The outer loop corresponds to the augmented Lagrangian (AL) method with a safeguarding procedure as described in \cite{kanzow2018error}; see also \cite{Steck2018}. A feature of this procedure is that instead of using the Lagrange multiplier \new{$\vec{\lambda}$} in the subproblem in line 4, one chooses a function \new{$\vec{w}$} from a bounded set $B$, which is essential for achieving global convergence. In practice, this should be chosen in such a way so that the projection is easy to compute, i.e., box constraints are appropriate. A natural choice is $\vec{w}^k:=\pi_B(\vec{\lambda}^k)$ \new{for a closed and convex set $B$}. \new{For the algorithm, we define a infeasibility measure and its induced sequence by
		\begin{equation*}
			H(\new{u},\vec{\lambda}; \mu) := \left\lVert \vec{h}(\new{u}) - \pi_{\vec{K}}\left( \vec{h}(\new{u}) +\frac{\vec{\lambda}}{\mu} \right) \right\rVert_2 \quad \text{and} \quad H_{k}:=H(\new{u}^{k},\vec{w}^{k-1};\mu_{k-1}).
	\end{equation*}}

	\begin{algorithm}
		\caption{Stochastic Augmented Lagrangian Method}
		\label{alg:algorithmAugLagr}
		\begin{algorithmic}[1]
			\State \textbf{Input:} Initial point $\new{u}^1 \in \new{\mathcal{U}}$, 
			AL parameters $\gamma > 1$, $\tau \in (0,1),\, B \subset \R^n$ 
			\State \textbf{Initialization:} $\mu_1 > 0$, $\vec{\lambda}^1 \in \R^n$,  $k:=1$ \setlength\itemsep{0ex}
			\While{$\new{u}^k$, $\vec{\lambda}^k$ not converged}
			\State Choose $\vec{w}^k \in B$, step size $t_k$, iteration limit $N_k$, and batch size $m_k$
			\State $\new{z}^{k,1}:=\new{u}^k$
			\State Take a sample $R_k$ from the uniform distribution on $\{ 1, \dots, N_k\}$ 
			\For{$j=1, \dots, R_k$}
			\State Take i.i.d.~samples $\{\vec{\xi}^{k,j,1}, \dots, \vec{\xi}^{k,j,m_k}\} $ according to probability distribution $\pP$ 
			\State $\new{z}^{k,j+1}:=\new{\exp}_{\new{z}^{k,j}}(- \frac{t_k}{m_k}\sum_{s=1}^{m_k}\nabla_{\new{u}} L_A(\new{z}^{k,j},\vec{w}^k, \vec{\xi}^{k,j,s};\mu_k))$
			\EndFor
			\State $\new{u}^{k+1}:=\new{z}^{k,j+1}$
			\State  $\vec{\lambda}^{k+1}:=\mu_k \left(\vec{h}(\new{u}^{k+1}) + \frac{\vec{w}^k}{\mu_k}  - \pi_{\vec{K}}\left( \vec{h}(\new{u}^{k+1}) +  \frac{\vec{w}^k}{\mu_k} \right) \right)$
			\State If $H_{k+1} \leq \tau H_k$ or $k=1$ satisfied, set $\mu_{k+1} = \mu_k$. Otherwise, set $\mu_{k+1}:=\gamma \mu_k$.
			\State $k:=k+1$
			\EndWhile
		\end{algorithmic}
	\end{algorithm}
	
	\subsection{Convergence of Inner Loop}
	\label{subsec:convergence}
	To prove convergence of the RSG procedure in Algorithm~\ref{alg:algorithmAugLagr}, we make the following assumptions about the manifold, which are adapted from \cite{Geiersbach2021}.
	\begin{ass}
		\label{assump:manifold}
		We assume that (i) the distance $\mathrm{d}(\cdot,\cdot)$ is non-degenerate,\\
		(ii) the manifold $(\new{\mathcal{U}},\new{\mathcal{G}})$ has a positive injectivity radius $i(\new{\mathcal{U}})$, and\\
		(iii) for all $\new{u}\in\new{\mathcal{U}}$ and all $\tilde{\new{u}} \in \new{\exp_{\vec{u}}(B_{i_{\new{u}}}(0_{\new{u}}))}$, the minimizing geodesic between $\new{u}$ and $\tilde{\new{u}}$ is completely contained in $B_{i_{\new{u}}}(0_{\new{u}})$. 
\end{ass}

As pointed out in \cite{Geiersbach2021}, the conditions in Assumption~\ref{assump:manifold}, \new{while mild for finite-dimensional manifolds,} are strong for infinite-dimensional manifolds. \new{Distances on an infinite-dimensional Riemannian manifold can be degenerate. For example, \cite{MichorMumford1} shows that the reparametrization invariant $L^2$-metric on the infinite-dimensional manifold of smooth planar curves induces a geodesic distance equal to zero. Any assumption regarding the injectivity radius is challenging to prove in practice.} In infinite dimensions, Riemannian metrics are generally weak, so that gradients may not exist. \new{For certain metrics, the exponential map may not be well-defined; it may even fail to be a diffeomorphism on any neighborhood, see, e.g.,~\cite{constantin2007geodesic}.}

In the following, a function $g \colon \new{\mathcal{U}} \rightarrow \R$ is called $L_g$-Lipschitz continuously differentiable if the function is $\mathcal{C}^1$ and there exists a constant $L_g>0$ such that for all $\new{u}, \tilde{\new{u}} \in \new{\mathcal{U}}$ with $\mathrm{d}(\new{u},\tilde{\new{u}})< i(\new{\mathcal{U}})$ we have
\begin{align*}
	\lVert P_{1,0}\nabla j(\tilde{\new{u}})-\nabla j(\new{u})\rVert_{\new{\mathcal{G}}} &\leq L_j \mathrm{d}(\new{u},\tilde{\new{u}}),
\end{align*}
where $P_{1,0}\colon T_{\gamma(1)}\new{\mathcal{U}} \rightarrow T_{\gamma(0)}\new{\mathcal{U}}$ is the parallel transport along the unique geodesic such that $\gamma(0) = \new{u}$ and $\gamma(1) = \tilde{\new{u}}.$
\begin{ass} 
	\label{assump:functions}
	\begin{enumerate}
		\item[(i)] The functions $j$ and ${h}_i$ ($i=1, \dots, n$) are $L_j$-Lipschitz and $L_{{h}_i}$-Lipschitz continuously differentiable and the gradients $\nabla j$ and $\nabla {h}_i$ ($i=1, \dots, n$) exist for all $\new{u} \in \new{\mathcal{U}}$.
		\item[(ii)]  \new{The function $J$ is continuously differentiable with respect to the first argument for every $\vec{\xi} \in \Xi$, the} stochastic gradient $\nabla_{\new{u}} J$ defined by \eqref{eq:parametrized-gradient} exists, and there exists $M>0$ such that:
		\begin{equation}
			\label{eq:expectation-variance-condition}
			\E[\lVert \nabla_{\new{u}} J(\new{u},\vec{\xi} ) - \nabla j(\new{u})\rVert_{\new{\mathcal{G}}}^2] \leq M^2 \quad  \forall \new{u} \in \new{\mathcal{U}}.
		\end{equation}
	\end{enumerate}
\end{ass}

We begin our investigations with the following useful property.
\begin{lemma}
	\label{lem:lagrangian-identity}
	Under Assumption~\ref{assump:manifold} and assuming the gradients $\nabla j$ and $\nabla {h}_i$ ($i=1, \dots, n$) exist, the iterates of  Algorithm~\ref{alg:algorithmAugLagr} satisfy
	\[
	\nabla_{\new{u}} \mathcal{L}_A(\new{u}^{k+1}, \vw^k; \mu_k) = \nabla_{\new{u}} \mathcal{L}(\new{u}^{k+1}, \vl^{k+1}) \quad \text{for all $k$.}
	\]
\end{lemma}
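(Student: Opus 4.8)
The plan is to compute the Riemannian gradient $\nabla_{\vu}\mathcal{L}_A$ explicitly, term by term, and then observe that the multiplier update in line 12 of Algorithm~\ref{alg:algorithmAugLagr} is precisely engineered to make it coincide with $\nabla_{\vu}\mathcal{L}$. Writing out $\mathcal{L}_A$ from \eqref{eq:augmented-Lagrangian-manifold}, the term $-\lVert\vl\rVert_2^2/(2\mu)$ is independent of $\vu$ and contributes nothing, while the $j$-term contributes $\nabla j(\vu)$. The entire content of the lemma therefore reduces to differentiating the composite map $\vu \mapsto \frac{\mu}{2}\textup{dist}_{\vK}(\vh(\vu)+\vl/\mu)^2$ with respect to $\vu$, with $\vl$ and $\mu$ held fixed.

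For this I would invoke the standard fact that, since $\vK$ is a nonempty closed convex set, the function $\vec{y}\mapsto \frac{1}{2}\textup{dist}_{\vK}(\vec{y})^2$ is continuously differentiable on $\R^n$ with gradient $\vec{y}-\pi_{\vK}(\vec{y})$ (Moreau's decomposition / the projection theorem for convex sets). Composing with the smooth map $\vu\mapsto \vh(\vu)+\vl/\mu$ and applying the chain rule through the pushforward, the differential applied to a test vector $\vw\in T_{\vu}\mathcal{U}^N$ reads $\mu\,(\vec{y}-\pi_{\vK}(\vec{y}))^\top (\vh_*)_{\vu}\vw$ with $\vec{y}=\vh(\vu)+\vl/\mu$, where $(\vh_*)_{\vu}\vw$ denotes the vector with components $((h_i)_*)_{\vu}\vw$. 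Using $((h_i)_*)_{\vu}\vw = \mathcal{G}^N(\nabla h_i(\vu),\vw)$, the componentwise formula $\pi_{\vK}(\vec{y})=(\pi_{K_1}(y_1),\dots,\pi_{K_n}(y_n))^\top$, and bilinearity of the metric, this differential equals $\mathcal{G}^N\big(\mu\sum_{i=1}^n (y_i-\pi_{K_i}(y_i))\nabla h_i(\vu),\,\vw\big)$, so by the defining relation of the Riemannian gradient one reads off
\[
\nabla_{\vu}\mathcal{L}_A(\vu,\vl;\mu) = \nabla j(\vu) + \mu\sum_{i=1}^n \left(h_i(\vu)+\tfrac{\lambda_i}{\mu}-\pi_{K_i}\!\left(h_i(\vu)+\tfrac{\lambda_i}{\mu}\right)\right)\nabla h_i(\vu).
\]

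The final step is pure bookkeeping: evaluate this identity at $\vu=\vu^{k+1}$, $\vl=\vw^k$, $\mu=\mu_k$, and compare the coefficient of each $\nabla h_i(\vu^{k+1})$ with the $i$th component of the update in line 12, namely $\lambda_i^{k+1}=\mu_k\big(h_i(\vu^{k+1})+\tfrac{w_i^k}{\mu_k}-\pi_{K_i}(h_i(\vu^{k+1})+\tfrac{w_i^k}{\mu_k})\big)$. These agree exactly, whence $\nabla_{\vu}\mathcal{L}_A(\vu^{k+1},\vw^k;\mu_k)=\nabla j(\vu^{k+1})+\sum_{i=1}^n \lambda_i^{k+1}\nabla h_i(\vu^{k+1})$, which is $\nabla_{\vu}\mathcal{L}(\vu^{k+1},\vl^{k+1})$ by the definition of the Lagrangian. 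I expect the only genuine subtlety to be justifying the gradient formula for the squared distance to $\vK$ (in particular that the implicit dependence of $\pi_{\vK}$ on $\vec{y}$ produces no extra terms) together with a clean application of the chain rule in the manifold setting, keeping straight that the $\R^n$-valued outer map is differentiated in the ordinary Euclidean sense while $\vh$ is differentiated via the pushforward. Assumption~\ref{assump:manifold} and the assumed existence of $\nabla j,\nabla h_i$ ensure all gradients appearing are well defined, so no further regularity is required.
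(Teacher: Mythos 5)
Your proposal is correct and follows essentially the same route as the paper: both rest on the fact that $\nabla \frac{1}{2}\textup{dist}_{\vec{K}}^2 = \textup{Id}_{\R^n} - \pi_{\vec{K}}$ (the paper cites Corollary 12.31 of Bauschke--Combettes for exactly this), apply the chain rule through the pushforward, read off the Riemannian gradient from the defining relation \eqref{DefGradient}, and then identify the resulting coefficients of $\nabla h_i(\vu^{k+1})$ with the multiplier update in the algorithm. Your added care about the smoothness of the squared distance and the Euclidean-versus-pushforward bookkeeping is exactly where the paper's one-line citation does the work, so nothing is missing.
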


\begin{proof}
	We have $\nabla \operatorname{dist}_{\vec{K}}^2 = 2(\textup{Id}_{\R^n} - \pi_{\vec{K}})$; see \cite[Corollary 12.31]{Bauschke2017}. Let $f(\new{u}):=\mathcal{L}_A(\new{u},\vec{w}; \mu)$. Then, the chain rule yields
	\begin{equation*}
		\begin{aligned}
			(f_*)_{\new{u}} \new{v} &= (j_*)_{\new{u}} {\new{v}} + \mu \sum_{i=1}^n \left ( h_i(\new{u}) + \frac{w_i}{\mu} - \pi_{K^i} \left( h_i(\new{u})+  \frac{w_i}{\mu}  \right) \right)  ((h_i)_*)_{\new{u}} \new{v}. \\
		\end{aligned}
	\end{equation*}
	From this, thanks to the identity (\ref{DefGradient}), \new{it follows} that
	\begin{equation*}
		\nabla f(\new{u}) = \nabla j(\new{u}) + \mu \nabla \vh(\new{u})^\top \left( \vh(\new{u}) + \frac{\vec{w}}{\mu} - \pi_{\vec{K}} \left( \vh(\new{u})+  \frac{\vec{w}}{\mu}  \right) \right),
	\end{equation*}
	and using the definition of $\vl^{k+1}$ from Algorithm~\ref{alg:algorithmAugLagr}, we obtain
	\begin{equation*}
		\nabla_{\new{u}} \mathcal{L}_A(\new{u}^{k+1}, \vw^k;\mu_k) = \nabla j(\new{u}^{k+1}) +  \nabla \vh({\new{u}^{k+1}})^\top \vl^{k+1}.
	\end{equation*}
	Using the fact that $\nabla_{\new{u}} \mathcal{L}(\new{u},\vl) = \nabla j(\new{u}) +  \nabla \vh({\new{u}})^\top \vl$, we have proven the claim.
\end{proof}

Now, we turn to an efficiency estimate for the inner loop. First, we define the functions
\begin{align*}
	&F_k(\new{u},\vec{\xi}):=L_A(\new{u},\vec{w}^k, \vec{\xi};\mu_k) \quad \text{and} \\
	&f_k(\new{u}) := \E[L_A(\new{u},\vec{w}^k, \vec{\xi};\mu_k)] = \mathcal{L}_A(\new{u},\vec{w}^k;\mu_k).
\end{align*}
Recall the convention $\vec{\xi} \in \Xi$ being used in the definition of $F_k$ and $\vec{\xi} \colon \Omega \rightarrow \Xi$ being used in the definition of $f_k$.
\begin{lemma}
	\label{lemma:descent-method}
	Suppose that Assumption~\ref{assump:manifold} and Assumption~\ref{assump:functions} are satisfied and let $\hat{B}_k \subset \new{\mathcal{U}}$ be a bounded set such that $\mathrm{d}(\tilde{\new{u}},\new{u}) \leq i(\new{\mathcal{U}})$  for all $\tilde{\new{u}}, {\new{u}} \in \hat{B}_k.$ 
	Then, $f_k$ is $L_k$-Lipschitz continuously differentiable with $L_k$ depending on $L_j, L_{{h}_1}, \dots, L_{{h}_n}$, and $\hat{B}_k$. Moreover, for all $\tilde{\new{u}},\new{u} \in \hat{B}_k$ with $\new{v}:=\exp_{\new{u}}^{-1}(\tilde{\new{u}})$, we have
	\begin{equation}
		\label{eq:LS-gradient-descent}
		f_k(\tilde{\new{u}})-f_k(\new{u}) \leq \new{\mathcal{G}}(\nabla f_k(\new{u}), \new{v}) + \frac{L_k}{2}\lVert \new{v}\rVert_{\new{\mathcal{G}}}^2.
	\end{equation}
\end{lemma}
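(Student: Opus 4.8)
The plan is to exploit the additive structure of $f_k$. Dropping the constant $-\lVert \vw^k\rVert_2^2/(2\mu_k)$, we have $f_k(\vu) = j(\vu) + \Phi(\vh(\vu))$ with $\Phi\colon\R^n\to\R$, $\Phi(\vec{y}):=\tfrac{\mu_k}{2}\textup{dist}_{\vK}(\vec{y}+\vw^k/\mu_k)^2$. By the identity $\nabla\textup{dist}_{\vK}^2 = 2(\textup{Id}_{\R^n}-\pi_{\vK})$ already used in Lemma~\ref{lem:lagrangian-identity}, the (Euclidean) gradient $\nabla\Phi(\vec{y}) = \mu_k(\vec{y}+\vw^k/\mu_k - \pi_{\vK}(\vec{y}+\vw^k/\mu_k))$ is $\mu_k$-Lipschitz, since $\textup{Id}_{\R^n}-\pi_{\vK}$ is nonexpansive (the projection onto the closed convex cone $\vK$ is firmly nonexpansive). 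The chain-rule computation from the proof of Lemma~\ref{lem:lagrangian-identity} then gives $\nabla f_k(\vu) = \nabla j(\vu) + \sum_{i=1}^n a_i(\vu)\,\nabla h_i(\vu)$ with $\vec{a}(\vu):=\nabla\Phi(\vh(\vu))$.

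First I would establish that $f_k$ is $L_k$-Lipschitz continuously differentiable on $\hat{B}_k$ by bounding $\lVert P_{1,0}\nabla f_k(\tilde{\vu}) - \nabla f_k(\vu)\rVert_{\mathcal{G}^N}$ term by term. The $j$-contribution is controlled directly by $L_j\,\mathrm{d}(\vu,\tilde{\vu})$. For each $h_i$-term I add and subtract $a_i(\vu)\,P_{1,0}\nabla h_i(\tilde{\vu})$ and use that parallel transport is an isometry to split the estimate into $|a_i(\tilde{\vu})-a_i(\vu)|\,\lVert\nabla h_i(\tilde{\vu})\rVert_{\mathcal{G}^N}$ and $|a_i(\vu)|\,\lVert P_{1,0}\nabla h_i(\tilde{\vu}) - \nabla h_i(\vu)\rVert_{\mathcal{G}^N}$; the latter factor is at most $L_{h_i}\,\mathrm{d}(\vu,\tilde{\vu})$. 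The remaining pieces are handled by boundedness on $\hat{B}_k$: each $\nabla h_i$, being Lipschitz, is bounded on the bounded set $\hat{B}_k$, so each $h_i$ is Lipschitz there and hence $\vh$ is Lipschitz; together with the $\mu_k$-Lipschitz continuity of $\nabla\Phi$ this yields $\lVert\vec{a}(\tilde{\vu})-\vec{a}(\vu)\rVert_2 \leq \mu_k L_{\vh}\,\mathrm{d}(\vu,\tilde{\vu})$ and a uniform bound on $|a_i|$. Collecting constants produces a single $L_k$ depending only on $L_j$, the $L_{h_i}$, and $\hat{B}_k$ (with $\mu_k$ fixed).

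Given this, I obtain \eqref{eq:LS-gradient-descent} by the standard geodesic argument. Parametrizing the minimizing geodesic $\gamma(t)=\exp_{\vu}(t\vec{v})$ with $\gamma(0)=\vu$ and $\gamma(1)=\tilde{\vu}$, I write $f_k(\tilde{\vu})-f_k(\vu) = \int_0^1 \mathcal{G}^N(\nabla f_k(\gamma(t)),\gamma'(t))\,\mathrm{d}t$. Since $\gamma'(t)$ is the parallel transport of $\vec{v}$ and parallel transport preserves $\mathcal{G}^N$, I transport the integrand back to $\vu$, subtract the linear term $\mathcal{G}^N(\nabla f_k(\vu),\vec{v})$, apply Cauchy--Schwarz, and invoke $\lVert P_{t,0}\nabla f_k(\gamma(t)) - \nabla f_k(\vu)\rVert_{\mathcal{G}^N} \leq L_k\,t\,\lVert\vec{v}\rVert_{\mathcal{G}^N}$; integrating $L_k t$ over $[0,1]$ supplies the factor $L_k/2$.

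The main obstacle is the first step: $\nabla f_k$ is not a plain sum of Lipschitz vector fields, because the scalar coefficients $a_i(\vu)$ multiply tangent vectors living in different tangent spaces, so the product rule must be executed through parallel transport and every factor controlled separately. The hypothesis that $\hat{B}_k$ is bounded with pairwise distances below $i(\mathcal{U}^N)$ is exactly what makes the parallel transport well defined along unique geodesics and provides the uniform bounds on $|a_i|$ and $\lVert\nabla h_i\rVert_{\mathcal{G}^N}$ needed to close the estimate.
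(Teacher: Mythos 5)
Your proposal is correct and follows essentially the same route as the paper's proof: the same product-rule decomposition of $\nabla f_k$ through parallel transport, the same splitting into a $|a_i(\tilde{\vu})-a_i(\vu)|\,\lVert\nabla h_i(\tilde{\vu})\rVert_{\mathcal{G}^N}$ term and an $|a_i(\vu)|\,L_{h_i}\,\mathrm{d}(\vu,\tilde{\vu})$ term, and the same use of boundedness of $\hat{B}_k$ to control all constants --- the paper merely works componentwise with $g_i = a_i/\mu_k$ and bounds $|g_i(\tilde{\vu})-g_i(\vu)| \leq 2|h_i(\tilde{\vu})-h_i(\vu)|$ via the contraction property of $\pi_{K_i}$, which is your nonexpansiveness of $\nabla\Phi$ in slightly looser form. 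The only deviation is the final step: where you prove \eqref{eq:LS-gradient-descent} inline by the standard geodesic fundamental-theorem-of-calculus argument with parallel transport, the paper simply cites \cite[Theorem 2.6]{Geiersbach2021}, which rests on exactly that argument.
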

\begin{proof}
	Let $P_{1,0}$ denote the parallel transport as defined directly before Assumption~\ref{assump:functions} and set $g_i({\new{u}}):=h_i({\new{u}}) + \frac{w^k_i}{\mu_k}-\pi_{K_i}(h_i({\new{u}}) + \frac{w^k_i}{\mu_k}).$ Since $h_i$ is $L_{h_i}$-Lipschitz continuously differentiable and $\hat{B}_k$ is bounded, there exists $C_{i,k}>0$ such that $\lVert \nabla h_i(\new{u})\rVert_{\new{\mathcal{G}}} \leq C_{i,k}.$ Now, we have
	\begin{equation}
		\begin{aligned}
			&\Big\lVert \sum_{i=1}^n P_{1,0}\nabla h_i(\tilde{\new{u}})  g_i(\tilde{\new{u}}) - \nabla h_i(\new{u})  g_i(\new{u}) \Big\rVert_{\new{\mathcal{G}}} \\
			& \quad \leq \sum_{i=1}^n \lVert P_{1,0} \nabla h_i(\tilde{\new{u}})-\nabla h_i(\new{u})\rVert_{\new{\mathcal{G}}}\vert g_i(\new{u})\vert  + \lVert  \nabla h_i(\new{u})\rVert_{\new{\mathcal{G}}}\vert g_i(\tilde{\new{u}})- g_i(\new{u}) \vert \\
			&\quad \leq  \sum_{i=1}^n L_{h_i} \textup{d}(\new{u},\tilde{\new{u}}) \vert g_i(\new{u})\vert + C_{i,k} \vert g_i(\tilde{\new{u}})- g_i(\new{u}) \vert \\
			&\quad \leq  \sum_{i=1}^n L_{h_i} \textup{d}(\new{u},\tilde{\new{u}}) \vert g_i(\new{u})\vert + 2 C_{i,k} \vert h_i(\tilde{\new{u}})- h_i(\new{u}) \vert,
			\label{eq:proof-LS-h}
		\end{aligned}
	\end{equation}
	where in the last step, we used the contraction property of the projection operator.
	Notice that
	\begin{equation}
		\label{eq:proof-LS-h-1}
		|h_i(\tilde{\new{u}})-h_i(\new{u})| \leq C'_i \mathrm{d}(\tilde{\new{u}},\new{u})
	\end{equation}
	for some $C'_i>0$ since $h_i$ is $\mathcal{C}^1$. Additionally, we have
	\begin{equation}
		\label{eq:proof-LS-h-2}
		|g_i(\new{u})| \leq \Big\vert h_i(\new{u})+\frac{w_i^k}{\mu_k}\Big\vert \quad (i \in \mathcal{E})
	\end{equation}
	and
	\begin{equation}
		\label{eq:proof-LS-h-3}
		|g_i(\new{u})| = \begin{cases} h_i(\new{u})+\frac{w_i^k}{\mu_k} &\text{if }  h_i(\new{u})+\frac{w_i^k}{\mu_k}\geq 0, \\
			0 & \text{else}
		\end{cases} \quad (i \in \mathcal{I}).
	\end{equation}
	Since $\hat{B}_k$ is bounded, \eqref{eq:proof-LS-h-2} and \eqref{eq:proof-LS-h-3} together imply that there exists $C_{i,k}'' >0$ such that $\vert g_i(\new{u})| \leq C_{i,k}''$.
	As a consequence of \eqref{eq:proof-LS-h} and \eqref{eq:proof-LS-h-1}, we have
	\begin{align*}
		&\Big\lVert \sum_{i=1}^n P_{1,0}\nabla h_i(\tilde{\new{u}}) g_i(\tilde{\new{u}}) - \nabla h_i(\new{u})  g_i(\new{u}) \Big\rVert_{\new{\mathcal{G}}} \leq \mathrm{d}(\new{u},\tilde{\new{u}})\sum_{i=1}^n L_{{h}_i} C_{i,k}''+ 2 C_{i,k}C_i' .
	\end{align*}
	Setting $\tilde{L}_{\vec{h},k}:= \sum_{i=1}^n L_{{h}_i} C_{i,k}''+ 2 C_{i,k}C_i'$, we have
	\begin{align*}
		& \lVert P_{1,0} \nabla f_k(\tilde{\new{u}}) - \nabla f_k(\new{u})\rVert_{\new{\mathcal{G}}}\\
		&\leq \lVert  P_{1,0} \nabla j(\tilde{\new{u}}) -\nabla j(\new{u}) \rVert_{\new{\mathcal{G}}}  + \mu_k \Big\lVert \sum_{i=1}^n P_{1,0}\nabla h_i(\tilde{\new{u}})  g_i(\tilde{\new{u}}) - \nabla h_i(\new{u})  g_i(\new{u}) \Big\rVert_{\new{\mathcal{G}}}\\
		&  \leq (L_j +\mu_k \tilde{L}_{\vec{h},k}) \mathrm{d}(\tilde{\new{u}},\new{u})
	\end{align*} 
	Therefore, $f_k$ is $L_k$-Lipschitz with $L_k:=L_j +\mu_k \tilde{L}_{\vec{h},k}.$ Applying \cite[Theorem 2.6]{Geiersbach2021}, we obtain \eqref{eq:LS-gradient-descent}. 
\end{proof}
\begin{remark}
	In the previous lemma, we introduced a bounded set $\hat{B}_k$. For the following results, we will need the existence of these sets containing the iterates almost surely within each $k$. Conditions ensuring boundedness can, e.g., be guaranteed by including constraints of the form $\new{u} \in C\subset \new{\mathcal{U}} $ for some bounded set $C$, or growth conditions on the gradient in combination with a regularizer; see \cite{geiersbach2023stochastic}. 
\end{remark}

Our first result concerning the convergence of Algorithm~\ref{alg:algorithmAugLagr} handles the efficiency of the inner loop process, which corresponds to a stochastic gradient method that is randomly stopped after $R_k$ iterations. We follow the arguments in \cite[Corollary 3]{Ghadimi2016}. It is possible to choose non-constant step sizes $t_{k_j}$; see \cite[Theorem 2]{Ghadimi2016}, but for the sake of clarity we observe step sizes that are constant in the inner loop here. 

To handle the analysis, we interpret $R_k$  as a realization of a stopping time $\tau_k \colon \Omega \rightarrow \{ 1, \dots, N_k\}$. Let $\vec{\xi}^{k,j}:=(\vec{\xi}^{k,j,1}, \dots,\vec{ \xi}^{k,j,m_k})$ be the batch associated with iteration $j$ for a given outer loop $k$ and let $\mathcal{F}_{k,n} = \sigma(\vec{\xi}^{\ell,i}: \ell \in \{1, \dots, k\}, i \in \{ 1, \dots, n \}) $ define the corresponding natural filtration. We define the filtration associated with the randomly stopped stochastic process by $\mathcal{F}^{\tau_k} =\{ \mathcal{F}_{\ell,n \wedge \tau_k}: \ell \in \{ 1, \dots, k\}, n \in \{ 1, \dots, N_k\} \}$.

\begin{theorem}
	\label{thm:RSG-method}
	Suppose Assumption~\ref{assump:manifold} and Assumption~\ref{assump:functions} are satisfied. Observe a fixed iteration $k$ from Algorithm~\ref{alg:algorithmAugLagr}. Suppose the iterates $\{ \vec{z}^{k,j}\}$ are a.s.~contained in a bounded set $\hat{B}_k \subset \new{\mathcal{U}}$, where $\mathrm{d}(\new{u},\tilde{\new{u}})\leq i(\new{\mathcal{U}})$ for all $\new{u},\tilde{\new{u}}\in \hat{B}_k$. Then, if the step \new{size $t_{k}$ satisfies} $t_{k} =\alpha_k/{L_k}$ for $\alpha_k \in (0,2)$ \new{and all $k$,} we have
	\begin{equation}
		\label{eq:efficiency}
		{\E}[\lVert \nabla f_k(\new{u}^{k+1})\rVert_{\new{\mathcal{G}}}^2 | \mathcal{F}^{\tau_k} ] \leq \frac{2L_k(f_k(\new{u}^k)-f_k^*)}{(2\alpha_k-\alpha_k^2)N_k}+ \frac{\alpha_k M^2}{(2-\alpha_k)m_k},
	\end{equation}
	where $f_k^*:=\inf_{u \in \hat{B}_k} f_k(\new{u})$.
	Moreover, if $\hat{B}_\infty:=\cup_{k=1}^\infty \hat{B}_k$ is bounded, $\mathrm{d}(\new{u}, \tilde{\new{u}}) \leq i(\new{\mathcal{U}})$ for all $\new{u}, \tilde{\new{u}} \in \hat{B}_\infty$, the maximum iterations $\{ N_k\}$ are chosen such that $N_k = \beta_k L_k$ for $\beta_k >0$, and
	\begin{equation}
		\label{eq:finite-sum}
		\sum_{k=1}^\infty \frac{1}{(2\alpha_k-\alpha_k^2)\beta_k}+ \frac{\alpha_k}{(2-\alpha_k)m_k}  < \infty,
	\end{equation}
	then we have $\lVert  \nabla f_k(\new{u}^{k+1})\rVert_{\new{\mathcal{G}}} \rightarrow 0$ a.s.~as $k\rightarrow \infty$.
\end{theorem}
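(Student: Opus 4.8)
The plan is to transplant the Euclidean randomized stochastic gradient analysis of \cite[Corollary 3]{Ghadimi2016} to the manifold setting, using the descent inequality \eqref{eq:LS-gradient-descent} of Lemma~\ref{lemma:descent-method} in place of the Euclidean Lipschitz estimate and the variance bound from Assumption~\ref{assump:functions}(ii). Fixing the outer index $k$, I would abbreviate the mini-batch gradient by $G_{k,j}:=\frac{1}{m_k}\sum_{s=1}^{m_k}\nabla_{\vec{u}}L_A(\vec{z}^{k,j},\vec{w}^k,\vec{\xi}^{k,j,s};\mu_k)$, so the inner update is $\vec{z}^{k,j+1}=\exp^N_{\vec{z}^{k,j}}(-t_k G_{k,j})$ and hence $\vec{v}=\exp^{-1}_{\vec{z}^{k,j}}(\vec{z}^{k,j+1})=-t_k G_{k,j}$. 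Since consecutive iterates lie in $\hat{B}_k$ within the injectivity radius, \eqref{eq:LS-gradient-descent} applies and gives the one-step estimate $f_k(\vec{z}^{k,j+1})-f_k(\vec{z}^{k,j})\le -t_k\mathcal{G}^N(\nabla f_k(\vec{z}^{k,j}),G_{k,j})+\tfrac{L_k t_k^2}{2}\lVert G_{k,j}\rVert_{\mathcal{G}^N}^2$.

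Next I would take conditional expectation given $\mathcal{F}_{k,j-1}$, with respect to which $\vec{z}^{k,j}$ is measurable while the fresh batch $\vec{\xi}^{k,j}$ is independent. Because $L_A$ and $J$ differ only by deterministic terms, $G_{k,j}$ is an unbiased estimator of $\nabla f_k(\vec{z}^{k,j})$, and since the batch is i.i.d.\ the tangent-space cross terms vanish, so \eqref{eq:expectation-variance-condition} yields $\E[\lVert G_{k,j}-\nabla f_k(\vec{z}^{k,j})\rVert_{\mathcal{G}^N}^2\mid\mathcal{F}_{k,j-1}]\le M^2/m_k$. Inserting $t_k=\alpha_k/L_k$ collapses the linear and quadratic contributions into $\E[f_k(\vec{z}^{k,j+1})\mid\mathcal{F}_{k,j-1}]\le f_k(\vec{z}^{k,j})-\tfrac{\alpha_k(2-\alpha_k)}{2L_k}\lVert\nabla f_k(\vec{z}^{k,j})\rVert_{\mathcal{G}^N}^2+\tfrac{\alpha_k^2 M^2}{2L_k m_k}$. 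Telescoping over $j=1,\dots,N_k$, using $\vec{z}^{k,1}=\vu^k$ and $f_k(\vec{z}^{k,N_k+1})\ge f_k^*$, and dividing by the descent coefficient produces $\frac{1}{N_k}\sum_{j=1}^{N_k}\E[\lVert\nabla f_k(\vec{z}^{k,j})\rVert_{\mathcal{G}^N}^2]\le \frac{2L_k(f_k(\vu^k)-f_k^*)}{(2\alpha_k-\alpha_k^2)N_k}+\frac{\alpha_k M^2}{(2-\alpha_k)m_k}$.

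To obtain \eqref{eq:efficiency} I would invoke the random stopping: as $R_k$ is uniform on $\{1,\dots,N_k\}$ and independent of the samples, averaging over $R_k$ turns the empirical average on the left into $\E[\lVert\nabla f_k(\vu^{k+1})\rVert_{\mathcal{G}^N}^2\mid\mathcal{F}^{\tau_k}]$, the conditioning on $\mathcal{F}^{\tau_k}$ serving to fix $\vu^k$ and the loop parameters $L_k,N_k,m_k$ while averaging the loop-$k$ innovations and $R_k$; a coupling argument that runs the full length-$N_k$ trajectory and lets $R_k$ select the output justifies the exchange between the a.s.-defined trajectory and the stopped one. For the almost sure statement I would substitute $N_k=\beta_k L_k$ into \eqref{eq:efficiency}, which cancels $L_k$ and leaves $\E[\lVert\nabla f_k(\vu^{k+1})\rVert_{\mathcal{G}^N}^2\mid\mathcal{F}^{\tau_k}]\le \frac{2(f_k(\vu^k)-f_k^*)}{(2\alpha_k-\alpha_k^2)\beta_k}+\frac{\alpha_k M^2}{(2-\alpha_k)m_k}$. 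Taking total expectations and summing over $k$, the second term is summable directly by \eqref{eq:finite-sum}; if $\sup_k(f_k(\vu^k)-f_k^*)=:\Delta<\infty$, the first is dominated by $2\Delta\sum_k\frac{1}{(2\alpha_k-\alpha_k^2)\beta_k}<\infty$. Then $\sum_k\E[\lVert\nabla f_k(\vu^{k+1})\rVert_{\mathcal{G}^N}^2]<\infty$, so by Tonelli $\sum_k\lVert\nabla f_k(\vu^{k+1})\rVert_{\mathcal{G}^N}^2<\infty$ almost surely, forcing $\lVert\nabla f_k(\vu^{k+1})\rVert_{\mathcal{G}^N}\to 0$ a.s.

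The step I expect to be the main obstacle is establishing the uniform bound $\Delta<\infty$. Since $f_k$ carries the penalty $\tfrac{\mu_k}{2}\textup{dist}_{\vec{K}}(\vec{h}(\cdot)+\vec{w}^k/\mu_k)^2$ and the safeguarded update may drive $\mu_k\to\infty$, boundedness of $\hat{B}_\infty$ controls $j$ and $\vec{h}$ but not, by itself, the gap $f_k(\vu^k)-f_k^*$, whose penalty part scales with $\mu_k$. Closing this requires exploiting that the proxy multipliers $\vec{w}^k\in B$ remain bounded and that the feasibility residual $\textup{dist}_{\vec{K}}(\vec{h}(\vu^k)+\vec{w}^k/\mu_k)$ is forced to shrink fast enough that $\mu_k\,\textup{dist}_{\vec{K}}(\vec{h}(\vu^k)+\vec{w}^k/\mu_k)^2$ stays bounded on $\hat{B}_\infty$; balancing the growth of $\mu_k$ against the shrinking residual is the delicate part, and it is where the boundedness hypotheses on $\hat{B}_\infty$ must be used most carefully.
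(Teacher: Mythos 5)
Your proposal reproduces the paper's proof essentially step for step: the same one-step descent estimate from Lemma~\ref{lemma:descent-method} with $\vec{v}=-t_kG_{k,j}$, the same vanishing of the cross terms via unbiasedness, the same variance bound $M^2/m_k$ from \eqref{eq:expectation-variance-condition} with Jensen's inequality and i.i.d.\ sampling, the same telescoping over $j=1,\dots,N_k$, and the same use of the uniform stopping index $R_k$ to convert the Ces\`aro average into the conditional bound \eqref{eq:efficiency}; whether you sum first and then take expectations (as the paper does) or take conditional expectations stepwise and telescope (as you do) is immaterial. Two remarks on the tail of the argument are worth making. First, for the almost-sure statement you use summability of $\E[\lVert\nabla f_k(\vec{u}^{k+1})\rVert_{\mathcal{G}^N}^2]$ plus Tonelli, while the paper applies Markov's inequality together with \eqref{eq:finite-sum} and, implicitly, Borel--Cantelli; the two routes are equivalent in strength and rest on exactly the same summability. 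Second, the step you single out as the main obstacle --- the uniform bound $\sup_k\bigl(f_k(\vec{u}^k)-f_k^*\bigr)<\infty$ --- is dispatched in the paper by the single line $f_k(\vec{u}^k)-f_k^*\leq 2\sup_{\vec{u}\in\hat{B}_\infty}|f_k(\vec{u})|\leq C$ ``due to the assumed smoothness of $f_k$'', i.e., the constant $C$ is simply posited to be independent of $k$. Your caution here is well placed: since $f_k$ contains the penalty $\tfrac{\mu_k}{2}\textup{dist}_{\vec{K}}\bigl(\vec{h}(\cdot)+\vec{w}^k/\mu_k\bigr)^2$, smoothness and boundedness of $\hat{B}_\infty$ only yield a $k$-dependent bound when $\mu_k\to\infty$ (the multiplier term $-\lVert\vec{w}^k\rVert_2^2/(2\mu_k)$ cancels in the gap and is harmless, but the penalty part scales with $\mu_k$ unless the feasibility residual on $\hat{B}_\infty$ decays like $\mu_k^{-1/2}$). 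So you have not left a gap that the paper closes by argument; you have identified a point the paper passes over, and a fully rigorous proof would need either the balancing you describe, boundedness of $\{\mu_k\}$, or an explicit hypothesis such as $\sup_k\sup_{\vec{u}\in\hat{B}_\infty}|f_k(\vec{u})|<\infty$.
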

\begin{proof}
	Let $k$ be fixed. We define $\delta^{j}:= \frac{1}{m_k}\sum_{i=1}^{m_k} \nabla_{\new{u}}F_k(\new{z}^{k,j},\vec{\xi}^{k,j,i})- \nabla f_k(\new{z}^{k,j}).$ With $\new{v}^j:=\exp_{\new{z}^{k,j}}^{-1}(\new{z}^{k,j+1}) = - \frac{1}{L_k m_k}\sum_{i=1}^{m_k} \nabla_{\new{u}}F_k(\new{z}^{k,j},\vec{\xi}^{k,j,i})$,  Lemma~\ref{lemma:descent-method} yields
	\begin{align*}
		&f_k({\new{z}}^{k,j+1})-f_k(\new{z}^{k,j}) \\
		&\leq - t_k \new{\mathcal{G}}\left(\nabla f_k(\new{z}^{k,j}), \frac{1}{m_k}\sum_{i=1}^{m_k} \nabla_{\new{u}}F_k(\new{z}^{k,j},\vec{\xi}^{k,j,i})\right) \\
		&  \qquad+ \frac{L_kt_k^2}{2}\left \lVert  \frac{1}{m_k}\sum_{i=1}^{m_k} \nabla_{\new{u}}F_k(\new{z}^{k,j},\vec{\xi}^{k,j,i}) \right\rVert_{\new{\mathcal{G}}}^2 \\
		& = -\frac{\alpha_k}{L_k} \lVert \nabla f_k(\new{z}^{k,j})\rVert_{\new{\mathcal{G}}}^2 -\frac{\alpha_k}{L_k} \new{\mathcal{G}}(\nabla f_k(\new{z}^{k,j}),\delta^j) \\
		&  \qquad + \frac{\alpha_k^2}{2 L_k} \left( \lVert \nabla f_k(\new{z}^{k,j})\rVert_{\new{\mathcal{G}}}^2 + 2\new{\mathcal{G}}(\nabla f_k(\new{z}^{k,j}),\delta^j) + \lVert \delta^j\rVert_{\new{\mathcal{G}}}^2 
		\right) \\
		& =  \left(-\frac{\alpha_k}{L_k}+\frac{\alpha_k^2}{2L_k}\right) \lVert \nabla f_k(\new{z}^{k,j})\rVert_{\new{\mathcal{G}}}^2 +
		\left(-\frac{\alpha_k}{L_k}+\frac{\alpha_k^2}{L_k}\right) \new{\mathcal{G}}(\nabla f_k(\new{z}^{k,j}),\delta^j) \\
		& \qquad +
		\frac{\alpha_k^2}{2 L_k} \lVert \delta^j\rVert_{\new{\mathcal{G}}}^2. 
	\end{align*}
	Taking the sum with respect to $j$ on both sides and rearranging, we obtain 
	\begin{equation}
		\label{eq:a-b}
		\begin{aligned}
			&\sum_{\ell=1}^{N_k} \lVert \nabla f_k(\new{z}^{k,\ell})\rVert_{\new{\mathcal{G}}}^2 \leq \frac{2L_k}{2\alpha_k-\alpha_k^2}( f_k(\new{z}^{k,1}) -f_k^*) \\
			&\qquad+ \frac{2(\alpha_k-1)}{2-\alpha_k} \sum_{\ell=1}^{N_k} \new{\mathcal{G}}(\nabla f_k(\new{z}^{k,\ell}),\delta^\ell) 
			+\frac{\alpha_k}{2-\alpha_k} \sum_{\ell=1}^{N_k}  \lVert \delta^\ell \rVert_{\new{\mathcal{G}}}^2
		\end{aligned}
	\end{equation}
	since $f_k^* \leq f_k(\new{z}^{k,N_k+1})$ and $0 < \alpha_k < 2$. Since $\nabla_{\new{u}}F_k$ is a stochastic gradient, we have $\E[\new{\mathcal{G}}(\nabla f_k(\new{z}^{k,j}),\delta^j)|\mathcal{F}_{k,j}] =\new{\mathcal{G}} (\nabla f_k(\new{z}^{k,j}),\E[\delta^j|\mathcal{F}_{k,j}])=0.$
	Notice that due to  \eqref{eq:expectation-variance-condition}, we have 
	\begin{equation}
		\label{eq:tower-property}
		\begin{aligned}
			& \E  \left[  \lVert  \nabla_{\new{u}}F_k(\new{z}^{k,j},\vec{\xi}^{k,j,i})- \nabla f_k(\new{z}^{k,j}) \rVert_{\new{\mathcal{G}}}^2 \vert  \mathcal{F}_{k,j}\right]
			\\ &= \E  \left[  \lVert  \nabla_{\new{u}}J(\new{z}^{k,j},\vec{\xi}^{k,j,i})- \nabla j(\new{z}^{k,j}) \rVert_{\new{\mathcal{G}}}^2 \vert  \mathcal{F}_{k,j}\right] \\
			&=  \E  \left[  \lVert  \nabla_{\new{u}}J(\new{z}^{k,j},\vec{\xi})- \nabla j(\new{z}^{k,j}) \rVert^2_{\new{\mathcal{G}}} \right] \leq M^2.
		\end{aligned}
	\end{equation}
	With \eqref{eq:tower-property}, we obtain
	\begin{equation}
		\label{eq:conditional-inequality}
		\begin{aligned}
			\E[ \lVert \delta^j\rVert_{\new{\mathcal{G}}}^2 |\mathcal{F}_{k,j}] 
			&= \frac{1}{m_k^2} \E  \left[ \Big \lVert  \sum_{i=1}^{m_k} \left(\nabla_{\new{u}}F(\new{z}^{k,j},\vec{\xi}^{k,j,i})- \nabla f_k(\new{z}^{k,j})\right) \Big\rVert_{\new{\mathcal{G}}}^2 \Big\vert  \mathcal{F}_{k,j}\right] \\
			&\leq  \frac{1}{m_k^2}  \sum_{i=1}^{m_k}  \E  \left[  \lVert  \nabla_{\new{u}}F(\new{z}^{k,j},\vec{\xi}^{k,j,i})- \nabla f_k(\new{z}^{k,j}) \rVert_{\new{\mathcal{G}}}^2 \vert  \mathcal{F}_{k,j}\right] \leq \frac{M^2}{m_k},
		\end{aligned}
	\end{equation}
	where we used Jensen's inequality, the linearity of the expectation, and  \eqref{eq:tower-property}.
	Taking the expectation on both sides of \eqref{eq:conditional-inequality}, using \eqref{eq:a-b}, and using the tower \new{rule, cf.~\cite[Proposition 1.1 (a), p. 471]{Gut2013}}, we get the inequality
	\begin{equation}
		\label{eq:inequality-conditioned}
		\sum_{\ell=1}^{N_k}\E[ \lVert \nabla f_k(\new{z}^{k,\ell})\rVert_{\new{\mathcal{G}}}^2] \leq  \frac{2 L_k( f_k(\new{z}^{k,1}) -f_k^*)}{2\alpha_k-\alpha_k^2}+ \frac{\alpha_k}{2-\alpha_k}\frac{M^2 N_k}{m_k}.
	\end{equation}
	Due to the law of total expectation, we have 
	\begin{align*}
		{\E}[\lVert \nabla f_k(\new{z}^{k,R_k})\rVert_{\new{\mathcal{G}}}^2|\mathcal{F}^{\tau_k} ] &= {\E}[\lVert \nabla f_k(\new{z}^{k,\tau_k})\rVert_{\new{\mathcal{G}}}^2|\mathcal{F}^{\tau_k} ] \\
		&= \sum_{\ell=1}^{N_k} \E[\lVert \nabla f(\new{z}^{k,\ell})\rVert_{\new{\mathcal{G}}}^2| \mathcal{F}_{k,\ell}] \pP\{\tau_k = \ell\} \\
		&= \frac{1}{N_k} \sum_{\ell=1}^{N_k} \E[\lVert \nabla f(\new{z}^{k,\ell})\rVert_{\new{\mathcal{G}}}^2] .
	\end{align*}
	Note that $f_k(\new{z}^{k,R_k}) = f_k(\new{u}^{k+1})$ and $f_k(\new{z}^{k,1})=f_k(\new{u}^k)$. Returning to \eqref{eq:inequality-conditioned}, we obtain 
	\begin{equation*}
		{\E}[\lVert \nabla f_k(\new{u}^{k+1})\rVert_{\new{\mathcal{G}}}^2| \mathcal{F}^{\tau_k} ] \leq  \frac{2 L_k( f_k(\new{u}^k) -f_k^*)}{(2\alpha_k-\alpha_k^2)N_k}+ \frac{\alpha_k M^2}{(2-\alpha_k)m_k},
	\end{equation*}
	so we have shown \eqref{eq:efficiency}. 
	
	Now, to prove almost sure convergence, we first observe that if all iterates are contained in $\hat{B}_\infty$, we have
	\begin{equation}
		\label{eq:uniform-boundedness-of-iterates}
		f_k(\new{u}^k) -  f_k^* \leq 2 \sup_{\new{u} \in \hat{B}_\infty} |f_k(\new{u})| \leq C
	\end{equation}
	for some $C>0$ due to the assumed smoothness of $f_k$ on $\new{\mathcal{U}}$. Taking the total expectation of \eqref{eq:efficiency}, Markov's inequality in combination with Jensen's inequality gives 
	\begin{align*}
		{\pP}\{ \lVert  \nabla f_k(\new{u}^{k+1})\rVert_{\new{\mathcal{G}}} \geq \varepsilon \} &\leq \varepsilon^{-2} {\E}[\lVert \nabla f_k(\new{u}^{k+1})\rVert^2_{\new{\mathcal{G}}} ] \\
		&\leq \varepsilon^{-2} \left(\frac{2L_kC}{(2\alpha_k-\alpha_k^2)N_k}+ \frac{\alpha_k M^2}{(2-\alpha_k)m_k}\right).
	\end{align*}
	Since $N_k=\beta_k L_k$ and \eqref{eq:finite-sum} holds, the infinite sum of the right-hand side is finite for every $\varepsilon>0$, implying the almost sure convergence of $\{  \lVert  \nabla f_k(\new{u}^{k+1})\rVert_{\new{\mathcal{G}}} \}$ to zero.
\end{proof}

For the choice $t_k = 1/L_k$ \new{and \eqref{eq:uniform-boundedness-of-iterates}}, the efficiency estimate \eqref{eq:efficiency} evidently simplifies to 
${\E}[\lVert \nabla f_k(\new{u}^{k+1})\rVert_{\new{\mathcal{G}}}^2 ] \leq \frac{2L_k\new{C}}{N_k}+ \frac{M^2}{m_k}.$ In the next section, we will investigate optimality of the solution in the limit as $k$ is taken to infinity. Since the Lipschitz constant $L_k$ has a potential to be unbounded due to the penalty term $\mu_k$, the maximal number of iterations $N_k$ needs to be balanced appropriately in this case. To obtain almost sure convergence, we required $N_k = \beta_k L_k$ for $\beta_k >0$. Alternatively, if it can be guaranteed that $L_k$ is bounded for all $k$ (for instance by bounding $\mu_k$), then one could (asymptotically) choose $t_k = \alpha_k/L$ with $L=\sup_{k} L_k$.
Regarding complexity, it is possible to establish the inner loop's complexity as argued in \cite[Section 4.2]{Ghadimi2016}. We define a $(\varepsilon_k, \eta_k)$-solution to the problem 
$\min_{\new{u} \in \new{\mathcal{U}}} \, \{ f_k(\new{u})= \E[F_k(\new{u},\vec{\xi})] \}
$
as the point $\hat{\new{u}}$ that satisfies ${\pP}\{\lVert \nabla f_k(\hat{\new{u}})\rVert_{\new{\mathcal{G}}}^2 \leq \varepsilon_k \} \geq 1 - \eta_k.$ Ignoring some constants, for the choice $t_k=1/L_k$, the complexity can be bounded by
$\mathcal{O} \left( (\eta_k \varepsilon_k)^{-1} + M^2 \eta_k^{-2} \varepsilon_k^{-2} \right).$

\subsection{Convergence of Outer Loop}
\label{subsec:outer-loop}
In the final part of this section, we analyze the behavior of the outer loop of Algorithm~\ref{alg:algorithmAugLagr} adapting arguments from \cite{Steck2018,kanzow2018augmented}. We define an optimality measure and its induced sequence by
\begin{equation*}
	r(\new{u},\vec{\lambda}) = \lVert \nabla_{\new{u}} \mathcal{L}(\new{u},\vec{\lambda}) \rVert_{\new{\cG}} + \lVert \vec{h}(\new{u}) - \pi_{\vec{K}}(\vec{h}(\new{u}) + \vec{\lambda}) \rVert_2, \quad r_k:=r(\new{u}^k,\vl^k)
\end{equation*}
and make the following assumptions on iterates induced by Algorithm~\ref{alg:algorithmAugLagr}.
\begin{ass}
	\label{assump:problem}
	We assume that
	\begin{enumerate}[label=(\roman*), leftmargin=2.5em]
		\item the sequence	 $\{ \new{u}^k\}$ is a.s.~contained in a bounded set $\hat{B}_\infty$ such that $\textup{d}(\new{u},\tilde{\new{u}}) \leq i(\new{\mathcal{U}})$ for all $\new{u},\tilde{\new{u}} \in \hat{B}_\infty$,
		\item $\lVert \nabla_{\new{u}}\mathcal{L}_A(\new{u}^{k+1},\vec{w}^k;\mu_k)\rVert_{\new{\mathcal{G}}}\rightarrow 0$ a.s.~as $k\rightarrow \infty$,
		\item $\{(\new{u}^k,\vec{\lambda}^k)\}$ converges a.s.~to the set of KKT points and
		\item for $k$ sufficiently large, we have $\vec{w}^k = \vec{\lambda}^k$.
	\end{enumerate}
\end{ass}

Note that Theorem~\ref{thm:RSG-method} implies Assumption~\ref{assump:problem}(ii). \new{Assumption~\ref{assump:problem}(iii) requires that every limit point of every realization of the sequence $\{u^k, \vec{\lambda}^k\}$ is a KKT point.}
In the absence of constraint qualifications, one can still work with asymptotic KKT (AKKT) conditions; under certain conditions, it can even be shown that they are necessary conditions (see, e.g., \cite[Theorem 5.3]{kanzow2018augmented}). We will say that a feasible point $\hat{\new{u}}$ satisfies the AKKT conditions if there exists a sequence $\{ \new{u}^k\}$ such that $\mathrm{d}(\new{u}^k,\hat{\new{u}})\rightarrow 0$ and a sequence $\{\vec{\lambda}^k \}$ contained in the dual cone $\vec{K}^{\oplus}:=\{ \vec{y} \in \R^n\colon \vec{y}^\top \vec{k} \geq 0 \, \forall \vec{k} \in \vec{K}\}$ such that 
\begin{equation}
	\label{eq:AKKT}
	\|\nabla j(\new{u}^k) + \nabla \vec{h}(\new{u}^k)^\top \vec{\lambda}^k\|_{\new{\mathcal{G}}} \rightarrow 0 \quad \text{ and } \quad \pi_{\vec{K}} (-\vec{h}(\new{u}^k))^\top \vec{\lambda}^k \rightarrow 0
\end{equation}
as $k \rightarrow \infty.$

A fundamental difference in the stochastic variant of the augmented Lagrangian method is that limit points, as limits of the stochastic process $(\new{u}^k, \vec{\lambda}^k)$, are random. In the following, we will consider a fixed limit point $(\hat{\new{u}},\hat{\vec{\lambda}})$ and the corresponding set of paths converging to it. This motivates the definition of the set
\begin{equation}
	\label{eq:probability-set}
	E_{\hat{\new{u}},\hat{\vec{\lambda}}}:=\{ \omega \new{\in \Omega}: (\new{u}^k(\omega), \vec{\lambda}^k(\omega)) \rightarrow (\hat{\new{u}}, \hat{\vec{\lambda}}) \quad \text{a.s.~on a subsequence} \}.
\end{equation}
\new{Note that here, and in the following analysis, $\omega$ represents an outcome of the random process $(\vec{\xi}^{1,1}, \dots, \vec{\xi}^{1,R_1}, \vec{\xi}^{2,1}, \dots,  \vec{\xi}^{2,R_2}, \dots)$ induced by sampling and random stopping.}

\begin{theorem}
	\label{theorem:convergence-properties}
	Suppose Assumption~\ref{assump:manifold}--Assumption~\ref{assump:problem}(i)-(ii) are satisfied. 
	Let $E:=\{ \omega \in \Omega: \mu_k(\omega) \text{ is a.s.~bounded}\}$. Then,  $\{ \vec{\lambda}^k(\omega)\}$ is a.s.~bounded on $E$ and any limit point $(\hat{\new{u}}, \hat{\vec{\lambda}})$ of $\{(\new{u}^k(\omega),\vec{\lambda}^k(\omega)): \omega \in E, k \in \N \}$ is a KKT point. On the set $\Omega \backslash E$, if a limit point $\hat{\new{u}}$ is feasible, then it is a AKKT point.
\end{theorem}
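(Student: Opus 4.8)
The plan is to let everything hang on the single stationarity identity provided by Lemma~\ref{lem:lagrangian-identity}: combined with Assumption~\ref{assump:problem}(ii) it gives
\[
\|\nabla j(\vec{u}^{k+1}) + \nabla\vec{h}(\vec{u}^{k+1})^\top\vec{\lambda}^{k+1}\|_{\mathcal{G}^N} = \|\nabla_{\vec{u}}\mathcal{L}_A(\vec{u}^{k+1},\vec{w}^k;\mu_k)\|_{\mathcal{G}^N}\to 0 \quad\text{a.s.},
\]
which is the ``gradient of the Lagrangian'' part of both the KKT and the AKKT conditions. I would first record two elementary facts straight from the update in line~11 and the componentwise projection formula: that $\lambda_i^{k+1}=\mu_k\max(0,h_i(\vec{u}^{k+1})+w_i^k/\mu_k)\ge 0$ for $i\in\mathcal{I}$ while $\lambda_i^{k+1}$ is free for $i\in\mathcal{E}$, so every $\vec{\lambda}^k$ already lies in the correct multiplier cone $\vec{K}^{\oplus}$; and the identity $H_k=\|\vec{\lambda}^k-\vec{w}^{k-1}\|_2/\mu_{k-1}$, obtained by solving the update for $\pi_{\vec{K}}$, which ties the feasibility sequence to the multipliers and is used in both regimes.

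On the set $E$ where $\mu_k$ stays bounded the argument is clean. Since $\gamma>1$, boundedness of $\mu_k$ forces it to be eventually constant, say $\bar{\mu}$; by line~12 this means $H_{k+1}\le\tau H_k$ for all large $k$, so $H_k\to 0$. Boundedness of $\{\vec{\lambda}^k\}$ then follows from $\|\vec{\lambda}^{k+1}\|_2=\mu_k\,\textup{dist}_{\vec{K}}(\vec{h}(\vec{u}^{k+1})+\vec{w}^k/\mu_k)\le\bar{\mu}\,\|\vec{h}(\vec{u}^{k+1})+\vec{w}^k/\mu_k\|_2$, which is bounded because the iterates stay in the bounded set $\hat{B}_\infty$ (Assumption~\ref{assump:problem}(i)), $\vec{h}$ is continuous, and $\vec{w}^k\in B$. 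For a limit point $(\hat{\vec{u}},\hat{\vec{\lambda}})$ along a subsequence I would pass to the limit in $H_k=\|\vec{h}(\vec{u}^k)-\pi_{\vec{K}}(\vec{h}(\vec{u}^k)+\vec{w}^{k-1}/\mu_{k-1})\|_2\to 0$, using nonexpansiveness of $\pi_{\vec{K}}$ and $\vec{w}^{k-1}/\mu_{k-1}\to\hat{\vec{\lambda}}/\bar{\mu}$ (itself a consequence of $H_k=\|\vec{\lambda}^k-\vec{w}^{k-1}\|_2/\mu_{k-1}\to 0$), to obtain $\vec{h}(\hat{\vec{u}})=\pi_{\vec{K}}(\vec{h}(\hat{\vec{u}})+\hat{\vec{\lambda}}/\bar{\mu})$. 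This one equation yields primal feasibility $\vec{h}(\hat{\vec{u}})\in\vec{K}$ and $\hat{\vec{\lambda}}\in N_{\vec{K}}(\vec{h}(\hat{\vec{u}}))$; unpacking the normal cone componentwise gives exactly \eqref{eq:KKT-conditions-2}--\eqref{eq:KKT-conditions-3}, and \eqref{eq:KKT-conditions-1} follows by sending $k\to\infty$ in the displayed identity, transporting the gradients to $T_{\hat{\vec{u}}}\mathcal{U}^N$ and invoking Lipschitz continuity of $\nabla j,\nabla h_i$ from Assumption~\ref{assump:functions}(i).

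On $\Omega\setminus E$, where $\mu_k\to\infty$, I would fix a feasible limit point $\hat{\vec{u}}$ and the subsequence $\vec{u}^k\to\hat{\vec{u}}$. The first relation in \eqref{eq:AKKT} is immediate from the stationarity identity (which holds for the whole sequence, hence on the subsequence), and $\vec{\lambda}^k\in\vec{K}^{\oplus}$ was already recorded. For the complementarity $\pi_{\vec{K}}(-\vec{h}(\vec{u}^k))^\top\vec{\lambda}^k\to 0$ I would use that feasibility of $\hat{\vec{u}}$ forces $\pi_{\vec{K}}(-\vec{h}(\hat{\vec{u}}))=\vec{0}$, so $\pi_{\vec{K}}(-\vec{h}(\vec{u}^k))\to\vec{0}$; the equality indices contribute nothing, and on indices $i\in\mathcal{I}$ with $h_i(\hat{\vec{u}})<0$ the update yields $\lambda_i^k=0$ for large $k$, so those terms vanish.

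The main obstacle is the complementarity on the active set $\{i\in\mathcal{I}:h_i(\hat{\vec{u}})=0\}$ when $\mu_k\to\infty$: there $\lambda_i^k=\max(0,w_i^{k-1}+\mu_{k-1}h_i(\vec{u}^k))$ may itself be unbounded, so the product $(h_i(\vec{u}^k))_+\lambda_i^k=(h_i(\vec{u}^k))_+\bigl(w_i^{k-1}+\mu_{k-1}h_i(\vec{u}^k)\bigr)$ is not controlled by the vanishing of $(h_i(\vec{u}^k))_+$ alone, since a term of order $\mu_{k-1}h_i(\vec{u}^k)^2$ can survive. Resolving this is the crux: I expect to need the finer interplay between the feasibility rate along the subsequence (again through $H_k=\|\vec{\lambda}^k-\vec{w}^{k-1}\|_2/\mu_{k-1}\to 0$) and the multiplier growth, together with the boundedness of the safeguard $\vec{w}^k\in B$. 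In the unbounded-multiplier regime one typically must either argue via the normalized multipliers $\vec{\lambda}^k/\|\vec{\lambda}^k\|_2$ or replace the product residual by a $\min$-type quantity, and I would structure the estimate precisely to isolate and kill the surviving quadratic term.
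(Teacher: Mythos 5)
Your treatment of the bounded-penalty set $E$ is correct and essentially the paper's own argument: the same eventual-constancy observation ($\gamma>1$ plus boundedness forces $\mu_k$ eventually constant, hence $H_{k+1}\le\tau H_k$ and $H_k\to 0$), the same identity $H_k=\lVert\vl^k-\vw^{k-1}\rVert_2/\mu_{k-1}$, and the same use of Lemma~\ref{lem:lagrangian-identity} with Assumption~\ref{assump:problem}(ii) for \eqref{eq:KKT-conditions-1}. Your route to \eqref{eq:KKT-conditions-2}--\eqref{eq:KKT-conditions-3} differs mildly but validly: you pass to the limit in $H_k\to 0$ to obtain the fixed-point relation $\vh(\hat{\vu})=\pi_{\vK}\bigl(\vh(\hat{\vu})+\hat{\vl}/\bar{\mu}\bigr)$ and unpack it, whereas the paper shows $\vl^{k+1}\in N_{\vK}(\vec{y}^{k+1})$ via the projection variational inequality and passes a null sequence $\gamma_k$ bounding $(\vec{y}-\vh(\vu^k))^\top\vl^k$ to the limit; both deliver $\hat{\vl}\in N_{\vK}(\vh(\hat{\vu}))$, and your direct bound $\lVert\vl^{k+1}\rVert_2\le\mu_k\,\mathrm{dist}_{\vK}(\vh(\vu^{k+1})+\vw^k/\mu_k)$ for multiplier boundedness is if anything simpler than the paper's.

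The genuine gap is in Part 2, and you flag it yourself: you leave the complementarity $\pi_{\vK}(-\vh(\vu^k))^\top\vl^k\to 0$ unproven on the active set when $\mu_k\to\infty$, proposing only directions (normalized multipliers, a $\min$-type residual). The paper closes this with a short case analysis on $\vec{p}^k:=(\mu_k\vh(\vu^{k+1})+\vw^k)^\top\pi_{\vK}(-\vh(\vu^{k+1}))$ in which, for $i\in\mathcal{I}$, the factor $\pi_{K_i}(-h_i(\vu^{k+1}))$ is read as the feasible-side slack $\max(0,-h_i(\vu^{k+1}))=-\pi_{K_i}(h_i(\vu^{k+1}))$: if $h_i(\vu^{k+1})\ge 0$ this factor is zero, so the quadratic term $\mu_k h_i(\vu^{k+1})^2$ you isolate never enters the residual at all; if $h_i(\vu^{k+1})<0$, then $\mu_k h_i(\vu^{k+1})+w_i^k\le w_i^k$ (equivalently, by your own max-formula, $\lambda_i^{k+1}=\max(0,\mu_k h_i(\vu^{k+1})+w_i^k)\le\max(0,w_i^k)$ is capped by the bounded safeguard), so on the active set $h_i(\hat{\vu})=0$ the term is bounded by $w_i^k\lvert h_i(\vu^{k+1})\rvert\to 0$, while inactive indices give $\lambda_i^k=0$ eventually, as you correctly noted. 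The one missing idea is thus the pairing: complementarity is measured against the slack, which vanishes identically on the infeasible side, while the safeguard bounds the multiplier on the feasible side; no normalization machinery is needed.

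In fairness, your reading $\pi_{K_i}(-h_i)=\min(0,-h_i)=-\max(0,h_i)$ is the literal one from the paper's stated projection formula, and under that literal reading the product does contain the uncontrolled $-\mu_k h_i^2$ term you describe (and the paper's two sub-cases would be swapped); the paper's case analysis only parses with the slack reading, which is also the standard AKKT complementarity measure in the safeguarded augmented Lagrangian literature. So your instinct that this step is delicate is sound, but the resolution is the convention and pairing above rather than the remedies you sketch, and as written your proposal leaves the second assertion of the theorem unproven.
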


\begin{proof}
	We will make arguments in \new{two} parts, \new{where we distinguish between the case of bounded and unbounded $\mu_k$}.\\
	\textbf{\new{Case} 1: Bounded $\mu_k$}. We first show that the sequence $\{ \vec{\lambda}^k\}$ is a.s.~bounded. Let $\vec{v}^{k+1}:= \vec{h}(\new{u}^{k+1}) +\frac{\vw^k}{\mu_k}$ and $\vec{y}^{k+1} := \pi_{\vec{K}}(\vec{v}^{k+1})$. By definition of $\vec{\lambda}^{k}$, we have
	\begin{equation}
		\label{eq:expression-h}
		\vec{h}(\new{u}^{k+1}) = \frac{1}{\mu_k}(\vec{\lambda}^{k+1}-\vec{w}^k)+ \vec{y}^{k+1}.
	\end{equation}
	Now, observe that the boundedness of $\{ \mu_k\}$ on $E$ implies that there exists a maximal iterate $\bar{k}$ in Algorithm~\ref{alg:algorithmAugLagr} such that $H_{k+1}\leq \tau H_k \leq \tau M$ is satisfied for every $k \geq \bar{k}$ and some $M>0$. This $M$ exists since $\vec{h}$ is $\mathcal{C}^1$ and $\new{u}^k$, $\vec{w}^k$, and $\mu_k$ are all bounded by assumption. In particular, $H_k \rightarrow 0$ as $k\rightarrow \infty$ on $E$. 
	In turn, \eqref{eq:expression-h} combined with the definition of $H_k$ implies the a.s.~convergence of $\| \vec{\lambda}^{k+1}-\vec{w}^k\|_2/\mu_k$ to zero, in turn implying $\| \vec{\lambda}^{k+1}-\vec{w}^k\|_2 \rightarrow 0$ for $k\rightarrow 0$. The boundedness of $\vec{w}^k$ guaranteed by Algorithm~\ref{alg:algorithmAugLagr} means therefore that $\{ \vec{\lambda}^k\}$ is bounded on $E$.
	
	Now, we prove that for any $\vec{y} \in \vec{K}$, there exists a nonnegative sequence $\gamma_k$ converging to zero and such that 
	\begin{equation}
		\label{eq:VI-nullsequence}
		(\vec{y}-\vec{h}(\new{u}^k))^\top\vec{\lambda}^k  \leq \gamma_k, \quad \omega \in E, k \in \N.
	\end{equation}
	With \cite[Theorem 3.14]{Bauschke2017}, the projection formula 
	\[(\vec{v}^{k+1}-\vec{y}^{k+1})^\top (\vec{y}^{k+1}-\vec{y}) \geq 0 \]
	holds for all $\vec{y} \in {\vec{K}}$, implying that $\vl^{k+1}=\mu_{k+1}(\vec{v}^{k+1}-\vec{y}^{k+1}) \in N_{\vK}(\vec{y}^{k+1}).$  
	Now, using $\vec{\lambda}^{k+1} \in N_{\vec{K}}(\vec{y}^{k+1})$ and \eqref{eq:expression-h}, we have
	\begin{align*}
		(\vec{y}-\vec{h}(\new{u}^{k+1}))^\top\vec{\lambda}^{k+1} &= \left(\vec{y}- \frac{1}{\mu_k}(\vec{\lambda}^{k+1}-\vec{w}^k)-\vec{y}^{k+1}\right)^\top\vec{\lambda}^{k+1}\\
		&\leq \frac{1}{\mu_k}((\vec{w}^k)^\top \vec{\lambda}^{k+1} - \lVert \vec{\lambda}^{k+1} \rVert_2^2)\\
		&= (\vec{y}^{k+1}-\vec{h}(\new{u}^{k+1}))^\top \vec{\lambda}^{k+1}=:\gamma_{k+1}.
	\end{align*}
	We have shown \eqref{eq:VI-nullsequence}. That $\{\gamma_k\}$ is a.s.~a null sequence follows from the fact that $\| \vec{\lambda}^{k+1}-\vec{w}^k\|_2/\mu_k$ a.s. converges to zero.
	
	Consider a subsequence of $\{ (\new{u}^k(\omega),\vec{\lambda}^k(\omega))\}$ that converge to a limit point $(\hat{\new{u}},\hat{\vec{\lambda}})$ for a fixed $\omega \in E_{\hat{\new{u}}, \hat{\vec{\lambda}}}$. We will prove that the limit point satisfies the KKT conditions
	\eqref{eq:KKT-conditions}. Continuity of $\nabla_{\new{u}} \mathcal{L}$ gives  $\lim_{k \rightarrow \infty} \nabla_{\new{u}} \mathcal{L}(\new{u}^k(\omega),\vec{\lambda}^k(\omega)) = \nabla_{\new{u}}\mathcal{L}(\hat{\new{u}}, \hat{\vec{\lambda}})$ and $\| \nabla_{\new{u}} \mathcal{L}(\hat{\new{u}}, \hat{\vec{\lambda}}) \|_{\new{\mathcal{G}}} = 0$ due to Assumption~\ref{assump:problem}(ii). By definition, $\nabla_{\new{u}} \mathcal{L}(\hat{\new{u}},\hat{\vec{\lambda}}) \in T_{\hat{\new{u}}}\new{\mathcal{U}}$, and the only element in $T_{\hat{\new{u}}}\new{\mathcal{U}}$ having norm zero is $0_{\hat{\new{u}}}$, thus \eqref{eq:KKT-conditions-1} is fulfilled. Since $\alpha_k \rightarrow 0$ a.s., we have that $(\vec{y}-\vec{h}(\hat{\new{u}}))^\top  \hat{\vec{\lambda}} \leq 0$ for all $\vec{y} \in \vec{K},$ implying that $\hat{\vec{\lambda}} \in N_{\vec{K}}(\vec{h}(\hat{\new{u}}))$. This immediately implies \eqref{eq:KKT-conditions-2}--\eqref{eq:KKT-conditions-3}.\\
	\noindent
	\textbf{\new{Case} 2: Unbounded $\mu_k$}. 
	Consider a fixed $\omega \in \Omega\backslash E$ and a sequence $\{ \new{u}^k(\omega)\}$ such that (possibly on a subsequence that we do not relabel) $\mathrm{d}(\new{u}^k(\omega), \hat{\new{u}}) \rightarrow 0$ as $k\rightarrow \infty$. Assumption~\ref{assump:problem}(ii) gives the first AKKT condition in \eqref{eq:AKKT}.
	It remains to prove that  $\pi_{\vec{K}} (-\vec{h}(\new{u}^k(\omega)))^\top \vec{\lambda}^k(\omega) \rightarrow 0$. Now, we define 
	\[\vec{p}^k(\omega):=(\mu_k(\omega) \vec{h}(\new{u}^{k+1}(\omega))+\vec{w}^k(\omega))^\top \pi_{\vec{K}}(-\vec{h}(\new{u}^{k+1}(\omega))).\]
	For readability, we will suppress the dependence on $\omega$.
	Since 
	\[\vec{\lambda}^{k+1} =  \mu_k \left( \vec{h}(\new{u}^{k+1})+\frac{\vec{w}^k}{\mu_k} - \pi_{\vec{K}}\left(\vec{h}(\new{u}^{k+1})+\frac{\vec{w}^k}{\mu_k} \right)\right)\]
	it is evidently enough to prove $\vec{p}^k \rightarrow 0$, since due to the contraction property of the projection, we have $\pi_{\vec{K}}(\vec{a}^k)^\top \vec{b}^k \rightarrow 0$ implies $\pi_{\vec{K}}(\vec{a}^k)^\top \pi_{\vec{K}}(\vec{b}^k) \rightarrow 0$ for any $\vec{a}^k, \vec{b}^k \in \R^n.$ Note that at least on a subsequence, we have $\vec{h}(\new{u}^{k+1}) \rightarrow\vec{h}(\hat{\new{u}})$ and $| \vec{h}(\new{u}^{k})|$ is bounded. 
	
	Consider first the case that $h_i(\hat{\new{u}})<0$. Then $\vec{h}(\new{u}^{k+1})\rightarrow \vec{h}(\hat{\new{u}})$ implies that $w_i^k+\mu_k h_i(\new{u}^{k+1})<0$ for $k$ sufficiently large, implying $\vec{p}^k \rightarrow 0$. 
	
	Consider now the case that $h_i(\hat{\new{u}})=0$. For a fixed $k$, if $h_i(\new{u}^{k+1}) \geq 0$ then $\vec{p}^k=0$. Else if $h_i(\new{u}^{k+1})<0$, then $p_i^k = (\mu_k h_i(\new{u}^{k+1}) +w_i^k) \pi_{\vec{K}}(-h_i(\new{u}^{k+1})) 
	\leq w_i^k |h_i(\new{u}^{k+1})|$. If $h_i(\new{u}^{k+1})<0$ infinitely many times, then $w_i^k |h_i(\new{u}^{k+1})| \rightarrow 0$, meaning  $\vec{p}^k \rightarrow 0$.
	
	Since $\vec{p}^k$ in both cases converges to zero and $\omega \in \Omega \backslash E$ was arbitrary, we have proven the claim.
\end{proof}

We now turn to local convergence statements. 
In the spirit of a local argument, we restrict our investigations to the study around a limit point for \textit{only those realizations converging to it}.  Again, we consider the set $E_{\hat{\new{u}},\hat{\vec{\lambda}}}$ defined in \eqref{eq:probability-set}.

\begin{lemma}
	\label{lem:technical-result-residual}
	Suppose Assumptions~\ref{assump:manifold}--\ref{assump:problem} hold. Let $(\hat{\new{u}}, \hat{\vl})$ be a limit point satisfying for some $c_1, c_2 >0$
	\begin{equation}
		\label{errorbound}
		c_1 r(\new{u},\vl) \leq \d(\new{u},\hat{\new{u}}) + \lVert \vl - \hat{\vl}\rVert_2 \leq c_2 r(\new{u},\vl)
	\end{equation}
	for all $(\new{u}, \vl)$ with $\new{u}$ near $\hat{\new{u}}$ and $r(\new{u},\vl)$ sufficiently small. Then we have for sufficiently large $k$ 
	\begin{equation*}
		\left(1-\frac{c_2}{\mu_k}  \right) r_{k+1} \leq \lVert \nabla_{\new{u}} \mathcal{L}_A(\new{u}^{k+1}, \vec{w}^k; \mu_k)\rVert_{\new{\cG}} + \frac{c_2}{\mu_k} r_k \quad \text{a.s. on } E_{\hat{\new{u}},\hat{\vec{\lambda}}}.
	\end{equation*}
\end{lemma}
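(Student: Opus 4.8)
The plan is to reduce the claim to a single one-step estimate on the feasibility part of the residual and then invoke the error bound \eqref{errorbound}. First I would split the optimality measure at the new iterate as
$r_{k+1} = \lVert \nabla_{\vu}\mathcal{L}(\vu^{k+1},\vl^{k+1})\rVert_{\cG^N} + \lVert \vh(\vu^{k+1}) - \pi_{\vK}(\vh(\vu^{k+1}) + \vl^{k+1})\rVert_2$
and apply Lemma~\ref{lem:lagrangian-identity} to identify the first summand with $\lVert \nabla_{\vu}\mathcal{L}_A(\vu^{k+1},\vw^k;\mu_k)\rVert_{\cG^N}$. Writing $A$ for this gradient term and $B$ for the feasibility term, so that $r_{k+1}=A+B$, the asserted inequality is equivalent, after elementary rearrangement, to $B \le \tfrac{c_2}{\mu_k}(r_k + r_{k+1})$. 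Thus the whole proof comes down to estimating $B$.

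The key step is the bound $B \le H_{k+1}$. Here I would use \eqref{eq:expression-h}, which writes $\vh(\vu^{k+1}) = \vec{y}^{k+1} + \tfrac{1}{\mu_k}(\vl^{k+1} - \vw^k)$ with $\vec{y}^{k+1} = \pi_{\vK}(\vh(\vu^{k+1}) + \tfrac{\vw^k}{\mu_k}) \in \vK$, so that $\lVert \vh(\vu^{k+1}) - \vec{y}^{k+1}\rVert_2 = H_{k+1}$ by \eqref{eq:definition-H}. In the proof of Theorem~\ref{theorem:convergence-properties} it was shown that $\vl^{k+1} \in N_{\vK}(\vec{y}^{k+1})$; the variational characterization of the projection (\cite[Theorem 3.14]{Bauschke2017}) therefore yields the identity $\pi_{\vK}(\vec{y}^{k+1} + \vl^{k+1}) = \vec{y}^{k+1}$. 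Substituting $\vh(\vu^{k+1}) = \vec{y}^{k+1} + \tfrac{1}{\mu_k}(\vl^{k+1}-\vw^k)$ into $B$ and using the nonexpansiveness of $\pi_{\vK}$ to move the projection argument from $\vec{y}^{k+1} + \vl^{k+1} + \tfrac{1}{\mu_k}(\vl^{k+1}-\vw^k)$ back to $\vec{y}^{k+1} + \vl^{k+1}$ bounds $B$ by a multiple of $H_{k+1}$; the sharp constant $B \le H_{k+1}$ is then recovered by instead comparing $B$ and $H_{k+1}$ componentwise, using $\pi_{K_i}(y)=0$ for $i \in \mathcal{E}$ and $\pi_{K_i}(y)=\min(0,y)$ together with $\lambda_i^{k+1} = \mu_k\max(0, h_i(\vu^{k+1}) + w_i^k/\mu_k) \ge 0$ for $i \in \mathcal{I}$.

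With $B \le H_{k+1}$ in hand, I would rewrite $H_{k+1} = \lVert \vl^{k+1} - \vw^k\rVert_2/\mu_k$, again via \eqref{eq:expression-h}, and invoke Assumption~\ref{assump:problem}(iv) to replace $\vw^k$ by $\vl^k$ for $k$ large, giving $H_{k+1} = \lVert \vl^{k+1} - \vl^k\rVert_2/\mu_k$. Since $\omega \in E_{\hat{\vec{u}},\hat{\vec{\lambda}}}$ forces $(\vu^k,\vl^k) \to (\hat{\vu},\hat{\vl})$ on a subsequence, for sufficiently large $k$ the point $\vu^k$ is near $\hat{\vu}$ and $r_k, r_{k+1}$ are small, so the upper estimate in \eqref{errorbound} applies at both $(\vu^k,\vl^k)$ and $(\vu^{k+1},\vl^{k+1})$, yielding $\lVert \vl^k - \hat{\vl}\rVert_2 \le c_2 r_k$ and $\lVert \vl^{k+1} - \hat{\vl}\rVert_2 \le c_2 r_{k+1}$. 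The triangle inequality then gives $\lVert \vl^{k+1} - \vl^k\rVert_2 \le c_2(r_k + r_{k+1})$, so that $B \le H_{k+1} \le \tfrac{c_2}{\mu_k}(r_k + r_{k+1})$, which is exactly the reduced inequality.

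I expect the main obstacle to be the sharp feasibility estimate $B \le H_{k+1}$: the soft, cone-agnostic argument via nonexpansiveness only delivers $B \le 2H_{k+1}$, and recovering the stated constant $c_2$ requires the componentwise analysis of $\pi_{\vK}$ and the sign of the multipliers, which in turn relies on the normal-cone property of $\vl^{k+1}$ and on the active/inactive structure of the constraints near the limit point. A secondary point needing care is the \emph{for sufficiently large $k$} qualifier: one must argue on the convergent subsequence defining $E_{\hat{\vec{u}},\hat{\vec{\lambda}}}$ that both successive iterates simultaneously lie in the region where \eqref{errorbound} is valid and where $\vw^k = \vl^k$.
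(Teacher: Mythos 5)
Your proposal is correct and follows the paper's proof essentially step for step: the same decomposition of $r_{k+1}$ via Lemma~\ref{lem:lagrangian-identity}, the same key estimate bounding the feasibility term by $H_{k+1} = \lVert \vl^{k+1}-\vl^{k}\rVert_2/\mu_k$ using $\vl^{k+1}\in N_{\vK}(\vec{y}^{k+1})$ and $\vec{w}^k=\vl^k$, and the same application of \eqref{errorbound} at both $(\vu^k,\vl^k)$ and $(\vu^{k+1},\vl^{k+1})$ followed by rearrangement. The only discrepancy is your claim that the nonexpansiveness route loses a factor of $2$: the paper obtains the sharp constant directly, since $\textup{Id}_{\R^n}-\pi_{\vK}$ is itself (firmly) nonexpansive (\cite[Prop.~12.27]{Bauschke2017}), so the map $\vec{y}\mapsto\vec{y}-\pi_{\vK}(\vec{y}+\vl^{k+1})$ is $1$-Lipschitz and comparing its values at $\vh(\vu^{k+1})$ and $\vec{y}^{k+1}$ yields the bound with constant $1$, making your componentwise analysis a valid but unnecessary detour.
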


\begin{proof}
	We have using Lemma \ref{lem:lagrangian-identity} and $\vec{w}^k = \vec{\lambda}^k$ that 
	\begin{equation}
		\label{eq:residual-iterate}
		r_{k+1} = \lVert \nabla_{\new{u}} \mathcal{L}_A(\new{u}^{k+1},\vl^{k};\mu_k)\rVert_{\new{\cG}} + \lVert \vh(\new{u}^{k+1}) - \pi_{\vec{K}}(\vh(\new{u}^{k+1})+\vl^{k+1})\rVert_2.
	\end{equation} 
	Let $\vec{v}^{k+1}:= \vec{h}(\new{u}^{k+1}) +\frac{\vw^k}{\mu_k}$ and $\vec{y}^{k+1} := \pi_{\vec{K}}(\vec{v}^{k+1})$. Then it follows that 
	\begin{equation*}
		\lVert \vec{y}^{k+1} - \pi_{\vK}(\vec{y}^{k+1}+ \vl^{k+1})\rVert_2 = 0 
	\end{equation*}
	since $\vl^{k+1} \in N_{\vK}(\vec{y}^{k+1})$ as argued in Part 1 of the proof of Theorem~\ref{theorem:convergence-properties}. Note that $\textup{Id}_{\R^n} - \pi_{\vK}$ is (firmly) nonexpansive (cf.~\cite[Prop.~12.27]{Bauschke2017}). It is an easy exercise to deduce that the mapping $\vec{y} \mapsto \vec{y} - \pi_{\vec{K}}(\vec{y}+\vl^{k+1})$ is nonexpansive as well, from which we can conclude 
	\begin{equation*}
		\label{eq:firm-nonexpansive}
		\begin{aligned}
			&\Big |\lVert \vec{h}(\new{u}^{k+1})- \pi_{\vK}(\vec{h}(\new{u}^{k+1})+\vl^{k+1})\rVert_2 - \lVert \vec{y}^{k+1} - \pi_{\vK}(\vec{y}^{k+1} + \vl^{k+1})\rVert_2 \Big| \\
			&\quad \leq
			\lVert \vec{h}(\new{u}^{k+1})- \pi_{\vK}(\vec{h}(\new{u}^{k+1})+\vl^{k+1})-\vec{y}^{k+1} + \pi_{\vK}(\vec{y}^{k+1} + \vl^{k+1})\rVert_2 \\
			& \quad \leq \lVert \vec{h}(\new{u}^{k+1}) - \vec{y}^{k+1}\rVert_2.
		\end{aligned}
	\end{equation*}
	Using the definition of $\vec{y}^{k+1}$ and $\vec{w}^k=\vec{\lambda}^k$, notice that 
	\begin{equation*}
		\begin{aligned}
			&\lVert \vec{h}(\new{u}^{k+1})- \pi_{\vK}(\vec{h}(\new{u}^{k+1})+\vl^{k+1})\rVert_2 \\
			&\leq  \lVert \vh(\new{u}^{k+1}) - \pi_{\vec{K}}(\vh(\new{u}^{k+1})+\vl^{k}/\mu_k)\rVert_2\\
			&=\frac{1}{\mu_k} \lVert \mu_k \vh(\new{u}^{k+1}) +\vl^k - \mu_k \pi_{\vec{K}}(\vh(\new{u}^{k+1})+\vl^{k}/\mu_k) -\vl^k \rVert_2\\
			& = \frac{1}{\mu_k} \lVert \vl^{k+1} - \vl^k\rVert_2.
		\end{aligned}
	\end{equation*}
	Returning to \eqref{eq:residual-iterate}, we obtain 
	\begin{equation}
		\label{eq:residual-iterate-bound}
		r_{k+1} \leq  \lVert \nabla_{\new{u}} \mathcal{L}_A(\new{u}^{k+1},\vl^{k};\mu_k)\rVert_{\new{\cG}} + \frac{1}{\mu_k} \left( \lVert \vl^{k+1} - \hat{\vl}\rVert_2 +  \lVert  \vl^k - \hat{\vl} \rVert_2\right).
	\end{equation}
	Since $\lim_{k\rightarrow \infty} \mathrm{d}(\new{u}^k, \hat{\new{u}}) = 0$ a.s.~on $E_{\hat{\new{u}}, \hat{\vec{\lambda}}}$, then for any  $\varepsilon>0$ there exists $\bar{k}$ such that $\mathrm{d}(\new{u}^k, \hat{\new{u}}) < \varepsilon$ for all $k\geq \bar{k}$ a.s. Possibly choosing $\bar{k}$ even larger, Assumption~\ref{assump:problem} combined with the positive injectivity radius further implies 
	$\lVert \vl^k(\omega) - \hat{\vl}\rVert_2 \leq c_2 r_k$ for almost all $\omega \in E_{\hat{\new{u}}, \hat{\vec{\lambda}}}$. 
	Using \eqref{eq:residual-iterate-bound}, we conclude that for almost all $\omega \in E_{\hat{\new{u}}, \hat{\vec{\lambda}}}$,
	\begin{equation*}
		r_{k+1} \leq \lVert \nabla_{\new{u}} \mathcal{L}_A(\new{u}^{k+1}(\omega),\vl^{k}(\omega);\mu_k(\omega))\rVert_{\new{\cG}}  + \frac{1}{\mu_k} (c_2 r_{k+1} + c_2 r_k),
	\end{equation*}
	for $k$ large enough. Rearranging terms proves the claim. 
\end{proof}

We are now ready to show the local rate of convergence.
\new{
	We recall the definition of convergence for the convenience of the reader: A sequence 
	$\{r_k\}$ that converges to $r^\ast$ is said to have order of convergence 
	$s \geq 1$ and rate of convergence $q$ if
	$${\displaystyle \lim _{k\rightarrow \infty }{\frac {\left|r_{k+1}-r^{*}\right|}{\left|r_{k}-r^{*}\right|^{s}}}=q .}$$
	Linear convergence occurs in the case $s=1$ and $q\in (0,1)$. Moreover, superlinear convergence occurs in all cases where $q>1$ and the case where $ s=1$ and $q=0$.
}
\begin{theorem}
	\label{thm:convergence-rate}
	Under the same assumptions as Lemma~\ref{lem:technical-result-residual}, assume further that $\lVert  \nabla_{\vu} \mathcal{L}_A(\vu^{k+1},\vl^{k};\mu_k)\rVert_{\cG^N} = o(r_k).$ 
	Then 
	\begin{enumerate}[label=\arabic*), leftmargin=1.5em]
		\item[1)] \new{Given the existence of} $\hat{\mu}_q >0$ such that if $\mu_k \geq \hat{\mu}_q$ for $k$ sufficiently large, 
		$\{(\vu^k, \vl^k)\}$ converges \new{linearly} to $(\hat{\vu}, \hat{\vl})$ a.s.~on $E_{\hat{\vec{u}},\hat{\vec{\lambda}}}$ \new{with convergence rate $q\in (0,1)$}.
		\item[2)] If $\mu_k \rightarrow \infty$, then 
		$(\vu^k, \vl^k) \rightarrow (\hat{\vu}, \hat{\vl})$  a.s.~on $E_{\hat{\vec{u}},\hat{\vec{\lambda}}}$ at a superlinear rate.
	\end{enumerate}
\end{theorem}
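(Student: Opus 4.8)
The plan is to leverage the recursive inequality from Lemma~\ref{lem:technical-result-residual} as the engine for both rates, treating the two cases as consequences of how the contraction factor behaves. From that lemma we have, for $k$ sufficiently large and a.s.~on $E_{\hat{\vec{u}},\hat{\vec{\lambda}}}$,
\[
\left(1-\frac{c_2}{\mu_k}\right) r_{k+1} \leq \lVert \nabla_{\vu} \mathcal{L}_A(\vu^{k+1},\vl^k;\mu_k)\rVert_{\cG^N} + \frac{c_2}{\mu_k} r_k.
\]
The additional hypothesis $\lVert \nabla_{\vu} \mathcal{L}_A(\vu^{k+1},\vl^k;\mu_k)\rVert_{\cG^N} = o(r_k)$ lets me absorb the gradient term: writing it as $\varepsilon_k r_k$ with $\varepsilon_k \to 0$, I would rearrange to obtain
\[
r_{k+1} \leq \frac{(c_2/\mu_k) + \varepsilon_k}{1 - c_2/\mu_k}\, r_k =: \theta_k\, r_k,
\]
valid once $\mu_k > c_2$. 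The whole argument then reduces to controlling the factor $\theta_k$.

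\textbf{For Part 1,} the hypothesis guarantees $\mu_k \geq \hat{\mu}_q$ eventually, and by choosing $\hat{\mu}_q$ large enough (say so that $c_2/\hat{\mu}_q$ is strictly below $q$) together with $\varepsilon_k \to 0$, I can force $\theta_k \leq q < 1$ for all large $k$. This yields a.s.~$r_{k+1} \leq q\, r_k$, so $r_k \to 0$ at a linear rate. Then I invoke the left inequality of the error bound~\eqref{errorbound}, namely $c_1 r(\vu,\vl) \leq \d(\vu,\hat{\vu}) + \lVert \vl - \hat{\vl}\rVert_2$ — more precisely its companion upper bound $\d(\vu^k,\hat{\vu}) + \lVert \vl^k - \hat{\vl}\rVert_2 \leq c_2 r_k$ — to transfer the linear decay of $r_k$ to linear convergence of $(\vu^k,\vl^k) \to (\hat{\vu},\hat{\vl})$ in the distance-plus-norm metric.

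\textbf{For Part 2,} the stronger hypothesis $\mu_k \to \infty$ makes $c_2/\mu_k \to 0$, so $\theta_k = \frac{(c_2/\mu_k)+\varepsilon_k}{1-c_2/\mu_k} \to 0$ as well. Hence $r_{k+1}/r_k \to 0$, which is exactly $Q$-superlinear convergence of the residual; transferring through the error bound~\eqref{errorbound} as before gives superlinear convergence of the iterates. The main subtlety I expect is bookkeeping the ``almost surely on $E_{\hat{\vec{u}},\hat{\vec{\lambda}}}$'' qualifier consistently — all the eventual inequalities hold only on this event and only for $k$ past some threshold $\bar{k}(\omega)$ that may depend on the path — and ensuring the error-bound estimate~\eqref{errorbound} is legitimately applicable, i.e.~that $r_k$ is eventually small enough and $\vu^k$ near $\hat{\vu}$, which is precisely guaranteed by the convergence $\d(\vu^k,\hat{\vu}) \to 0$ a.s.~on $E_{\hat{\vec{u}},\hat{\vec{\lambda}}}$ established earlier. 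Beyond that, the two parts are essentially immediate once the recursive contraction is set up.
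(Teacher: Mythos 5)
Your proposal is correct and takes essentially the same route as the paper's proof: the paper likewise rearranges the inequality of Lemma~\ref{lem:technical-result-residual} into $r_{k+1}/r_k \leq c_2/(\mu_k - c_2) + o(1)$ — which is exactly your contraction factor $\theta_k$ — and then transfers the resulting linear (for $\mu_k \geq \hat{\mu}_q$ with $q$ arbitrary in $(0,1)$) and superlinear (for $\mu_k \to \infty$) decay of $r_k$ to the iterates $(\vu^k,\vl^k)$ via the error bound~\eqref{errorbound}. Your added bookkeeping of the path-dependent threshold $\bar{k}(\omega)$ and the applicability of~\eqref{errorbound} is sound and only makes explicit what the paper leaves implicit.
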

\begin{proof}
	Note that for $k$ large enough, we have $\vec{w}^k = \vl^k$ and Lemma \ref{lem:technical-result-residual} gives
	$$\left(1-\frac{c_2}{\mu_k}  \right) r_{k+1} \leq \lVert \nabla_{\vu} \mathcal{L}_A(\vu^{k+1}, \vec{w}^k; \mu_k)\rVert_{\cG^N} + \frac{c_2}{\mu_k} r_k = o(r_k) + \frac{c_2}{\mu_k} r_k.$$
	Taking $\mu_k$  \new{such that $\mu_k-c_2>0 $} gives
	$r_{k+1} \leq \frac{\mu_k}{\mu_k - c_2} \left( o(r_k) + \frac{c_2}{\mu_k} r_k \right) .$ 
	This implies 
	\begin{equation*}
		\frac{r_{k+1}}{r_k} \leq \frac{\mu_k}{\mu_k - c_2} \left( o(1) + \frac{c_2}{\mu_k}  \right)=\frac{c_2}{\mu_k - c_2} + o(1).
	\end{equation*}
	Thanks to the error bound \eqref{errorbound}, we get the corresponding rates for $\{(\vu^k, \vl^k)\}$. 
\end{proof}
In practice, the assumption $\lVert  \nabla_{\vu} \mathcal{L}_A(\vu^{k+1},\vl^{k};\mu_k)\rVert_{\cG^N} = o(r_k)$ is difficult to implement since one can only work with estimates $$\hat{f}_k \approx \E[L_A(\vu^{k+1}, \vec{\lambda}^k, \vec{\xi}; \mu_k)] = \mathcal{L}_A(\vu^{k+1}, \vec{\lambda}^k; \mu_k).$$ However, we have a convergence rate guaranteed in expectation by \eqref{eq:efficiency}, which can be used to choose appropriate sequences for $N_k$ and $m_k$. A possible heuristic is shown in the following section.

\section{Application and Numerical Results}
\label{sec:numericalResults} 

In this section, we present an application to a two-dimensional fluid-mechanical problem to demonstrate the algorithm. We denote the hold-all domain as $D=D(\vu)$, which is partitioned into $N+1$ disjoint subdomains $D_1, \ldots, D_{N+1}$, where $D_{N+1}$ represents the subdomain in which fluid is allowed to flow, and the other sets are obstacles around which the fluid is supposed to flow. The subdomain boundaries are defined as $\partial D_1 = u_1$, $\ldots$, $\partial D_N = u_N$, and $\partial D_{N+1} = \Gamma \cup u_1 \cup \cdots \cup u_N $, where $\Gamma$ is the outer boundary that is fixed and split into two disjoint parts
$	\GD$ and $\GN $
representing the Dirichlet and Neumann boundary, respectively.

\new{In  \cite{LoayzaGeiersbachWelkerHandbook}, a shape is seen as a point on an abstract manifold so that a collection of shapes can be viewed as a vector of points $\vec{u}=(u_1, \dots, u_N)$ in a product manifold $\mathcal{U}^N = \mathcal{U}_1 \times \cdots \times \mathcal{U}_N$, \new{where $\mathcal{U}_i$ are Riemannian manifolds for all $i=1,\dots,N$}. 
	In the following, our shapes are the above-mentioned obstacles leading to a multi-shape optimization problem.
	One should take into account that a product manifold is a manifold and, thus, all theoretical findings from the Section \ref{sec:optimization-approach} can also be applied to product manifolds.
}
We will work with a (possibly infinite-dimensional) connected Riemannian product manifold  $(\new{\cU},\new{\cG})=(\cU^N, \cG^N)$.  
As described in  \cite{LoayzaGeiersbachWelkerHandbook}, the tangent space
$T\mathcal{U}^N$ can be identified with the product of tangent spaces $T\mathcal{U}_1\times\dots\times T\mathcal{U}_N$ via 
$
T_{\vu} \cU^N \cong T_{u_1} \cU_1 \times \dots \times T_{u_N} \cU_N.
$
Additionally, the product metric $\mathcal{G}^N$ to the corresponding product shape space $\mathcal{U}^N$ can be defined via
$\mathcal{G}^N=\new{(\mathcal{G}^N_{\vu})_{\vu\in\mathcal{U}^N}}$,
where
\begin{equation}
	\label{eq:product_metric}
	\mathcal{G}^N_{\vu}(\vec{v},\vec{w}) =\sum_{i=1}^{N} \mathcal{G}_{\pi_i(u)}^{i}(\pi_{i_\ast}\vec{v},\pi_{i_\ast}\vec{w})\qquad\forall \,  
	\vec{v},\vec{w} \in T_u \cU^N
\end{equation}
and $\pi_i\colon \cU^N\to \cU_i$, $i=1, \dots, N$, correspond to canonical projections.
\new{If we work with multiple shapes $\vu$, the exponential map in Algorithm~\ref{alg:algorithmAugLagr} needs to be replaced by the so-called
	multi-exponential map. 
	Let $V_{\vec{u}}^N\coloneqq V_{u_1}\times \cdots \times V_{u_N}$, where   $V_{u_i}\coloneqq \{v_i\in T_{u_i}\mathcal{U}_i\colon 1\in I_{u_i,v_i}^{\mathcal{U}_i}\}$ for all $i=1,\dots,N$.
	Then, we define the multi-exponential map by
	$\exp_{\vec{u}}^N\colon V_{\vec{u}}^N\to \mathcal{U}^N,\, \vec{v}=(v_1,\dots,v_N)\mapsto (\exp_{u_1}v_1,\dots,\exp_{u_N}v_N)$
	for the vector $\vec{u}=(u_1,\dots,u_N)$, where 
	$\exp_{u_i}\colon \new{V_{u_i}}\to \mathcal{U}_i,\,v_i\mapsto \exp_{u_i}(v_i)$ for all $i=1,\dots,N$.}

The shape space we consider in the numerical experiments is the product space of plane unparametrized curves, i.e., 
$\mathcal{U}^N=B_e^N(S^1,\R^2)$.
The shape space $B_e(S^1,\R^2)$ is defined as the orbit space of $\mathrm{Emb}(S^1,\mathbb{R}^2)$ under the action by composition from the right by the Lie group $\mathrm{Diff}(S^1)$, i.e., $B_e(S^1,\R^2)	:=\text{Emb}(S^1,\mathbb{R}^2) / \text{Diff}(S^1)$   (cf., e.g., \cite{MichorMumford1}). 
Here, $\mathrm{Emb}(S^1,\R^2)$ denotes the set of all embeddings from the unit circle $S^1$ into $\R^2$, and $\mathrm{Diff}(S^1)$ is the set of all diffeomorphisms from $S^1$ into itself.
In \cite{KrieglMichor}, it is proven that the shape space $B_e(S^1,\R^2)$ is a smooth manifold; together with appropriate inner products, it is even a Riemannian manifold.
In our numerical experiments, we choose the Steklov--Poincar\'e metric defined in \cite{Schulz2016a}.
Originally, it is defined as a mapping from Sobolev spaces. To define a metric on $B_e(S^1,\R^2)$, the Steklov--Poincar\'e metric is restricted to  a mapping  from the tangent spaces, i.e., $ T_u B_e(S^1,\R^2) \times T_u B_e(S^1,\R^2) \rightarrow \R$, where $T_uB_e(S^1,\mathbb{R}^{2})\cong\left\{h\colon h=\alpha \vec{n},\, \alpha\in \mathcal{C}^\infty(S^{1})\right\}$.
Of course, one can choose a different metric on the shape space to represent the shape gradient. We focus on the Steklov--Poincar\'{e} metric due to its advantages in combination with the computational mesh (cf.~\cite{Siebenborn2017,Schulz2016a}).

The physical system on $D$ is described by the Stokes equations under uncertainty. Note that here, flow is modeled on the domain $D$ instead of $D_{N+1}$. This is done (in view of the tracking-type functional) to produce a shape derivative on the entire domain. Let $V(D) = \left\{ \velocity \in H^1( D, \R^2)\colon \velocity\vert_{\GD \cup \vec{u}} = \vec{0} \right\}$ denote the function space associated to the velocity for a fixed domain $D$. We neglect volume forces and consider a deterministic viscosity of the fluid. Inflow $\vec{g}$ on parts of the Dirichlet boundary is assumed to be uncertain and is modeled as a random field $\vec{g}\colon D \times \Xi  \rightarrow \R^2$ with regularity $\vec{g} \in L_{\pP}^2(\Xi, H^1( D, \R^2))$ and depending on $\vec{\xi} \colon \Omega \rightarrow \Xi \subset \R^m$. We will use the abbreviation $\vec{g}_{\vec{\xi}} = \vec{g}(\cdot,\vec{\xi})$. For each realization $\vec{\xi}$, consider Stokes flow in weak form: find $\velocity_{\vec{\xi}} \in H^1( D, \R^2)$ and $\pressure_{\vec{\xi}} \in L^2( D)$ such that $\velocity_{\vec{\xi}}-\vec{g}_{\vec{\xi}} \in V(D)$ and
\begin{subequations}
	\label{eq:weak-formulation}
	\begin{align}
		\intD{ \nabla \velocity_{\vec{\xi}} : \nabla \adjvelocity- \pressure_{\vec{\xi}} \Div{\adjvelocity}} &= 0 \quad \forall \adjvelocity \in V(D), \\
		\intD{ \adjpressure \Div{\velocity}_{\vec{\xi}} } &= 0 \quad \forall \adjpressure \in L^2(D).
	\end{align}
\end{subequations}
Here, $\vec{A} : \vec{B} = \sum_{j=1}^{d} \sum_{k=1}^{d} A_{j k} B_{j k}$ for two matrices $\vec{A},\vec{B} \in \R^{d\times d}$. The gradient and divergence operators $\nabla$ and $\Divv$ act with respect to the spatial variable only with $\vec{\xi}$ acting as a parameter.

For each shape $u_i$, $i=1,\ldots,N$, we introduce one inequality constraint for a constrained volume, see equation~\eqref{eqn:ModelStokesEquationsVolumeConstraint} and one inequality constraint for a constrained perimeter, see equation~\eqref{eqn:ModelStokesEquationsPerimeterConstraint}. 
The volume of the domain $D_{i}$ is given by
$\vol(D_i) = \intDi{1}$
and the perimeter of $u_i$ is given by
$ \peri(u_i) = \intui{1}.$ 
Now, we suppose there is a deterministic target velocity $\bar{\velocity}$ to be reached on the domain $D$. We would like to determine the optimal placement of shapes that come closest on average to this velocity field. More precisely, we solve the problem
\begin{align}
	\label{eqn:ModelMinimizationProblem}
	\min_{\vu\in B_e^N(S^1,\R^2)} \, \left\lbrace j(\vu) = \int_\Omega { \intD{ \lVert \velocity_{\vec{\xi}(\omega)}(\vx) + \vec{g}_{\vec{\xi}(\omega)}(\vec{x})- \bar{\velocity}(\vec{x}) \rVert_2^2 } \d \pP(\omega)} \right\rbrace
\end{align}
subject to \eqref{eq:weak-formulation} and
\begin{subequations}	
	\label{eq:ModelStokesEquations}
	\begin{align}
		\label{eqn:ModelStokesEquationsVolumeConstraint}
		\vol(D_i)&\geq \underline{\mathcal{V}}_i  & \forall i=1,\ldots,N ,\\
		\label{eqn:ModelStokesEquationsPerimeterConstraint}
		\peri(u_i) &\leq \overline{\mathcal{P}}_{i} & \forall i=1,\ldots,N.
	\end{align}
\end{subequations}
We note that a deterministic model using a tracking-type functional in combination with Stokes flow has been studied in \cite{MR4144267}.

\new{\paragraph{Shape derivative.}}
In the following, we \new{compute the shape derivative of the para\-metrized augmented Lagrangian corresponding to}  the model problem defined by \eqref{eq:weak-formulation}--\eqref{eq:ModelStokesEquations}. We define $\vec{h}\colon B_e^N(S^1,\R^2) \rightarrow \R^{2 N}$ by 
\begin{equation*}
	\vec{h}(\vec{u}) = \begin{pmatrix}
		\vec{h}_{V}(\vu) \\
		\vec{h}_{\mathcal{P}}(\vu)
	\end{pmatrix}  = 
	\begin{pmatrix}
		\left[ \underline{\mathcal{V}}_i - \vol(D_i) \right]_{i\in\{1,\dots,N\}} \\[3pt]
		\left[ \peri(u_i) - \overline{\mathcal{P}}_i \right]_{i\in\{1,\dots,N\}}
	\end{pmatrix},
\end{equation*}
as well as the set $\vec{K}  := \{ \vec{h} \in \R^{2 N}:  h_i \leq 0 \,\, \forall i=1, \ldots, 2 N\}$ and the objective
$J(\vu,\vec{\xi}):=\intD{ \lVert \velocity_{\vec{\xi}}(\vx) + \vec{g}_{\vec{\xi}}(\vec{x})- \bar{\velocity}(\vx) \rVert_2^2 }.$
The parametrized augmented Lagrangian is defined by
\begin{align}
	\label{eqn:augLagrangeFunction}
	\begin{aligned}
		L_{A}(\vu, \vec{\lambda},\vec{\xi};\mu) 	&= J(\vec{u},\vec{\xi})+ \intD{  \nabla \velocity_{\vec{\xi}} : \nabla \adjvelocity_{\vec{\xi}} - \pressure_{\vec{\xi}} \Div{\adjvelocity}_{\vec{\xi}} + \adjpressure_{\vec{\xi}} \Div{\velocity}_{\vec{\xi}} } \\
		&\quad + \frac{\mu}{2} \dist_{\vec{K}} \left(\vec{h}(\vec{u}) + \frac{\vec{\lambda}}{\mu} \right)^2 - \frac{\|\vec{\lambda}\|_2^2}{2 \mu}.
	\end{aligned}
\end{align}

Differentiating the Lagrangian \eqref{eqn:augLagrangeFunction} with respect to $\left(\velocity,\pressure\right)$ and setting it to zero gives the weak form of the adjoint equation: find $\adjvelocity_{\vec{\xi}} \in V(D)$ and $\adjpressure_{\vec{\xi}} \in L^2(D)$ such that 
\begin{subequations}
	\label{eqn:ModelStokesEquationsMomentumAdj}	
	\begin{align}
		\intD{ 2 \tilde{\adjvelocity}^\top \left( \velocity_{\vec{\xi}} + \vec{g}_{\vec{\xi}}  - \bar{\velocity} \right) + \nabla \adjvelocity_{\vec{\xi}} : \nabla \tilde{\adjvelocity}+ \adjpressure_{\vec{\xi}} \Div{\tilde{\adjvelocity}} } &= 0 \quad \forall \tilde{\adjvelocity} \in V(D),\\
		\intD{ \Div{\adjvelocity}_{\vec{\xi}} \,\tilde{\adjpressure} }  & = 0 \quad \forall \tilde{\adjpressure} \in L^2(D).
	\end{align}
\end{subequations}
We define the space $\mathcal{W}(D) = \{ \vec{W} \in H^1(D, \R^2)\colon \vec{W}|_{\Gamma}=0 \}$. We have the shape derivative
\begin{align*}
	\Deriv &L_A (\vu, \vec{\lambda}, \vec{\xi};\mu)\left[\vec{W}\right] \nonumber \\
	=& \int_{D} - \left( \nabla \velocity_{\vec{\xi}} \nabla \vec{W} \right) : \nabla \adjvelocity_{\vec{\xi}} - \left( \nabla \adjvelocity_{\vec{\xi}} \nabla \vec{W} \right) : \nabla \velocity_{\vec{\xi}} + \left( \pressure_{\vec{\xi}} {\nabla \adjvelocity_{\vec{\xi}}}^\top - \adjpressure_{\vec{\xi}} {\nabla \velocity_{\vec{\xi}}}^\top \right) : \nabla \vec{W} \nonumber\\
	&\hphantom{\int_{D_{\vu}}\,} + \Div(\vec{W}) \left( \lVert \velocity_{\vec{\xi}} + \vec{g}_{\vec{\xi}}- \bar{\velocity} \rVert_2^2 + \nabla \velocity_{\vec{\xi}} : \nabla \adjvelocity_{\vec{\xi}} - \pressure_{\vec{\xi}} \Div\adjvelocity_{\vec{\xi}}  + \adjpressure_{\vec{\xi}} \Div \velocity_{\vec{\xi}} \right) \!\dx \nonumber\\
	&+ \mu \left( \left( \vec{h}(\vec{u}) + \frac{\vec{\lambda}}{\mu} \right) - \pi_{\vec{K}} \left(\vec{h}(\vec{u}) + \frac{\vec{\lambda}}{\mu} \right) \right)^\top \nonumber \\ 
	&\hphantom{+}\ \begin{pmatrix}
		\left[\intDi{\Divv{(\vec{W})}} \right]_{i\in\{1,\dots,N\}} \\[3pt]
		\left[\intui{\Divv{(\vec{W})} - \vn^\top \nabla \vec{W} \vn} \right]_{i\in\{1,\dots,N\}}
	\end{pmatrix}, \label{eq:ShapeDerivativeLA}
\end{align*}
where $\left(\velocity_{\vec{\xi}},\pressure_{\vec{\xi}}\right)$ and $\left(\adjvelocity_{\vec{\xi}},\adjpressure_{\vec{\xi}}\right)$ solve the state equation \eqref{eq:weak-formulation} and adjoint equation \eqref{eqn:ModelStokesEquationsMomentumAdj}, respectively. 
	The shape derivative is needed to represent the gradient with respect to the metric under consideration (cf., e.g., \cite{LoayzaGeiersbachWelkerHandbook}).
	As described in \cite{LoayzaGeiersbachWelkerHandbook}, we can use the multi-shape derivative in an ``all-at-once''-approach to compute the multi-shape gradient  with respect to the Steklov--Poincaré metric and the mesh deformation~$\vV = \vV_{\vec{\xi}}$
	all at once 
	by solving
	\begin{equation}
		a(\vec{V}, \vec{W}) = \Deriv L_A (\vu,\vec{\lambda}, \vec{\xi};\mu)[\vec{W}] \quad \forall \vec{W}\in \mathcal{W}(D)\cap \mathcal{C}^\infty(D,\R^2),
		\label{deformatio_equation}
	\end{equation}
	where $a$ is a coercive and symmetric bilinear form. 
	The mesh deformation $\vec{V}$ calculated from (\ref{deformatio_equation}) can be viewed as an extension of the multi-shape gradient~$\vec{v}$ with respect to the Steklov--Poincaré metric to the hold-all domain $D$  (for details we refer the reader to \cite{LoayzaGeiersbachWelkerHandbook}).

	The bilinear form that describes linear elasticity is a common choice for $a$ due to the advantageous effect on the computational mesh (cf.~\cite{Siebenborn2017,Welker2016}), and is selected for the following numerical studies. The Lam\'e parameters are chosen as $\hat{\lambda}=0$ and $\hat{\mu}$ smoothly decreasing from $33$ on $\vu$ to $10$ on $\Gamma$, as obtained by the solution of Poisson's equation on $D$.

	To update the shapes according to Algorithm \ref{alg:algorithmAugLagr}, we need to compute the multi-exponential map.
	This computation is prohibitively expensive in most applications because a calculus of variations problem must be solved or the Christoffel symbols need be known. 
	Therefore, we approximate it using a multi-retraction
	\[
	\mathcal{R}_{\vec{z}^{k,j}}^N\colon T_{\vec{z}^{k,j}}\mathcal{U}^N\to \mathcal{U}^N,\, \vec{v}=(v_1,\dots,v_N)\mapsto (\mathcal{R}_{z^{k,j}_1}v_1,\dots,\mathcal{R}_{z^{k,j}_N}v_N)
	\]
	to update the shape vector $\vec{z}^{k,j}=(z^{k,j}_1,\dots,z^{k,j}_N)$ in each  pair $(j,k)$.
	For each shape $z_i^{k,j}$ we use the retraction in \cite{Geiersbach2021,LoayzaGeiersbachWelkerHandbook,SchulzWelker}:  $\mathcal{R}_{z_i^{k,j}}\colon T_{z_i^{k,j}}\mathcal{U}^i \to \mathcal{U}^i,\, 
	v_i \mapsto  z_i^{k,j}+v_i $ for all $i=1,\dots,N$.
	
	\paragraph{Numerical results.}
	All numerical simulations were performed on the HPC cluster HSUper\footnote{Further information about the technical specifications can be found at \texttt{https://www.hsu-hh.de/hpc/en/hsuper/}.} using the FEniCS toolbox, version 2019.1.0 \cite{AlnaesEtal2015} and Python \new{3.10.10}. The hold-all domain is chosen as $D=(0,1)^2$. We choose $N=3$ shapes inside the hold-all domain, which can be seen on the left-hand side of Figure~\ref{fig:NumericResults_initial_and_target_and_final}. The computational mesh is generated with Gmsh \new{4.11.1} \cite{Geuzaine2009}, which yields 265 line elements for the outer boundary and the interfaces, and 3803 triangular elements as the discretization of $D$. Additionally, a new mesh was automatically generated if the mesh quality\footnote{The mesh quality is measured with the FEniCS function \texttt{MeshQuality.radius\_ratio\_min\_max}.} fell below a threshold of $40\%$. \new{A reevaluation of all relevant values within the optimization (e.g., objective functional and geometrical constraints) after remeshing ensures that optimization can continue to be performed. It has already been observed that this increases the number of optimization iterations (cf., e.g., \cite{Pryymak2023}), but is difficult to avoid due to quickly deteriorating meshes.} The target velocity is shown in Figure~\ref{fig:NumericResults_initial_and_target_and_final} on the right, together with the shapes to obtain the target velocity \new{in white}. Standard Taylor-Hood elements are used.
	
	\begin{figure}[tbp]
		\centering
		\setlength\figureheight{.5\textwidth} 
		\setlength\figurewidth{.5\textwidth}
		\includegraphics{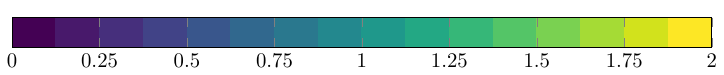}\\%
		\includegraphics{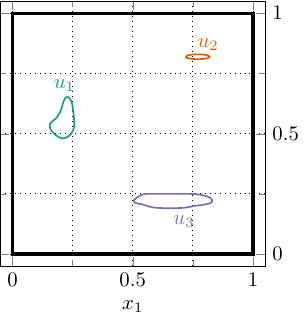}%
		\includegraphics{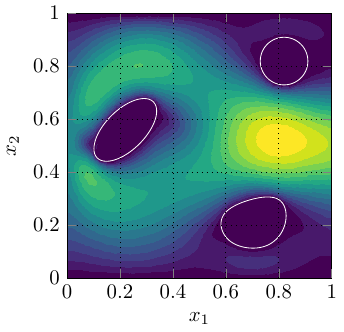}%
		\caption{Shapes $u_1$ (\new{left, green}), $u_2$ (\new{top right, orange}) and $u_3$ (\new{bottom right, blue}) at the start of the stochastic optimization (left) and the magnitude of the target fluid velocity \new{$\| \bar{\velocity} \|_2$} together with the shapes used to obtain the target velocity \new{in white} (right).}
		\label{fig:NumericResults_initial_and_target_and_final}
	\end{figure}
	The values of the geometrical constraints were chosen in accordance with the shapes of the target velocity. The volumes of $D_1$, $D_2$ and $D_3$ were constrained to be at or above $0.035295$, $0.025397$ and $0.036967$, and the perimeters of $u_1$, $u_2$ and $u_3$ to be at or below $0.72630$, $0.56521$ and $0.69796$, respectively. The augmented Lagrangian parameters in Algorithm~\ref{alg:algorithmAugLagr} were initialized to $\vec{\lambda}^1=\vec{0}$, $\mu_1=10$, $\gamma=10$, and $\tau=0.9$. The ball for the projection of Lagrange multipliers was chosen to be $B=[-100,100]^{2N}$. 
	
	\begin{figure}[tbp]
		\centering
		\setlength\figureheight{.2\textwidth} 
		\setlength\figurewidth{.2\textwidth}
		\begin{subfigure}[t]{1.47\figurewidth}%
			\centering%
			\includegraphics{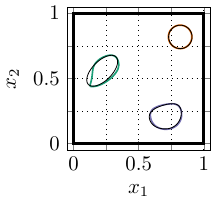}%
			\caption{Seed $964113$.}
			\label{fig:NumericResults_Shapes_seed1}%
		\end{subfigure}%
		\begin{subfigure}[t]{1.1\figurewidth}%
			\centering%
			\includegraphics{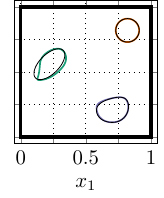}%
			\caption{Seed $454612$.}
			\label{fig:NumericResults_Shapes_seed2}%
		\end{subfigure}%
		\begin{subfigure}[t]{1.1\figurewidth}%
			\centering%
			\includegraphics{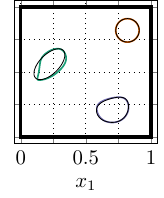}%
			\caption{Seed $421507$.}
			\label{fig:NumericResults_Shapes_seed3}%
		\end{subfigure}%
		\begin{subfigure}[t]{1.1\figurewidth}%
			\centering%
			\includegraphics{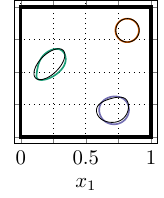}%
			\caption{Seed $107785$.}
			\label{fig:NumericResults_Shapes_seed4}%
		\end{subfigure}%
		\caption{Shapes $u_1$ (\new{left, green}), $u_2$ (\new{top right, orange}) and $u_3$ (\new{bottom right, blue}) \new{after $k=11$ iterations} of stochastic optimization with different seeds and shapes used to obtain the target velocity (black).}
		\label{fig:NumericResults_Shapes}
	\end{figure}
	
	We chose homogenous Dirichlet boundary conditions for the velocity on the top and bottom boundary and on $\vu$ (see Figure~\ref{fig:NumericResults_initial_and_target_and_final}, \new{right}). The inflow profile on the left boundary is modeled as an inhomogenous Dirichlet boundary with $\vec{g}_{\vec{\xi}}(\vec{x}) = ( \kappa(\vec{x},\vec{\xi}), 0)^\top$. The horizontal component is given by the truncated Karhunen-Lo\`eve expansion
	\begin{align*}
		\kappa(\vec{x},\vec{\xi}) = -4 x_2 (x_2-1) + \sum_{\ell=1}^{100} \ell^{-\eta-1/2} \sin(2 \pi \ell (x_2-1/2)) \xi_\ell,
	\end{align*}
	where  $\eta=3.5$ and $\xi_\ell \sim U\!\left[-\frac{1}{2}, \frac{1}{2}\right]$ ($U[a,b]$ being the uniform distribution on the interval $[a,b]$). \new{We used \texttt{numpy.random} from \texttt{numpy} 1.22.4 for the generation of all random values. For this, \texttt{rng=numpy.random.default\_rng(seed)} is used to set the generator and then the random samples are drawn by calling \texttt{rng.uniform(lowerBound, upperBound, shape)}. The lower and upper bound correspond to the bounds of the uniform distribution. The shape of the matrix of random values was set to $(100, m_k)$ yielding $100 \times m_k$ random values per stochastic gradient step, generated row by row. We chose the four different seeds $964113$, $454612$, $421507$ and $107785$. Parallelization of multiple realizations was performed via MPI using \texttt{mpi4py} version 3.1.4, which distributed the matrix to the $m_k$ processes column-wise.} On the right boundary, a homogenous Neumann boundary condition is imposed. The step size is chosen as $t_k=\frac{20}{\mu_k}$, the scaling of which is obtained by tuning (to avoid deterioration of the mesh, especially in the first steps of the inner loop procedure).
	The maximum number of inner loop iterations is \new{chosen to be $N_k=c_1 \cdot 2^k$, with $c_1=4$ or $c_1=25$}. The batch size is increased according to \new{$m_k=c_2 \cdot 2^k$, with $c_2=\frac{1}{2}$ or $c_2=5$}. Each inner loop $k$ requires $m_k\cdot R_k$ solutions of the state equation, the adjoint equation, the Poisson equation for the Lam\'e parameter, and the deformation equation, which becomes computationally expensive for high $k$.
	
	\new{The obtained shapes for $c_1=4$ and $c_2=\frac{1}{2}$ for different seeds are shown in Figure~\ref{fig:NumericResults_Shapes}.}  For all seeds, the \new{top-right} shape $u_2$ looks basically identical to the shape used to obtain the target velocity, however $u_1$ (\new{left}) shows differences \new{at the bottom left and on the right-hand side between different seeds and compared to the shapes for the target velocity}, and $u_3$ has a different \new{left side}. \new{Differences for $u_3$ between the different seeds can also be observed.} We investigate the optimization with the random seed $421507$ further. The remesher is activated after the stochastic gradient step \new{$4$, $9$, $13$, $20$, $26$, $37$, $93$ and $1682$}. In Figure~\ref{fig:NumericResults_objectiveFunctional_and_meshDeformNorm}, the numerical results for objective functional estimate~$\hat{j}=\frac{1}{m_k}\sum_{i=1}^{m_k} J(\vec{z}^{k,j},\vec{\xi}^{k,j,i})$ and the estimate of the $H^1$ norm of the mesh deformation $\widehat{\vV}=\frac{1}{m_k}\sum_{i=1}^{m_k} \vV_{\vec{\xi}^{k,j,i}}$ over cumulative \new{stochastic gradient steps} is provided. Here, even for a comparatively low number of samples per \new{step}, we see a strong decrease in objective functional values initially. The points where the inner loop is stopped due to reaching $R_k$ are denoted by the red vertical dashed lines in the right-hand side plot. At the later stages of the optimization the batch size is increased up to \new{$m_{11}=1024$} for \new{$k=11$}. This yields an increasingly accurate approximation of the mesh deformation and the objective functional value as evidenced by the decreasing variance.
	
	\begin{table}[tbp]
		\caption{\new{Estimate of the norm of the mesh deformation~$\widehat{\vV}$ with a batch size of $m=10024$ and value of the infeasibility measure~$H_k$ at the end of each inner loop using different seeds, $c_1=4$ and $c_2=\frac{1}{2}$.}}
		\small
		\centering
		\setlength\tabcolsep{3pt}
		\renewcommand{\arraystretch}{1.25}
		\new{%
			\begin{tabular}{rr|rrr} 
				\multicolumn{1}{c}{$k$} & \multicolumn{1}{c|}{$R_k$} & \multicolumn{1}{c}{$\| \widehat{\vV} \|_{H^1}$}  & $\mu_k$ & \multicolumn{1}{c}{$H_k$} \\
				\hline
				$1$    & $3$   & $8.751 \cdot 10^{-3}$ & $10$ & $3.853 \cdot 10^{-2}$ \\
				$2$   & $8$  & $5.765 \cdot 10^{-3}$ & $10$ & $3.363 \cdot 10^{-2}$ \\
				$3$   & $4$  & $1.002 \cdot 10^{-2}$ & $10$ & $3.107 \cdot 10^{-2}$ \\
				$4$   & $28$  & $1.372 \cdot 10^{-2}$ & $10$ & $2.621 \cdot 10^{-2}$ \\
				$5$   & $23$  & $1.653 \cdot 10^{-2}$ & $100$ & $2.004 \cdot 10^{-2}$ \\
				$6$   & $109$  & $1.643 \cdot 10 ^{-2}$ & $100$ & $5.905 \cdot 10^{-3}$ \\
				$7$   & $68$ & $2.795 \cdot 10^{-3}$ & $100$ & $4.725 \cdot 10^{-3}$ \\
				$8$   & $490$ & $5.361 \cdot 10^{-3}$ & $100$ & $4.601 \cdot 10^{-3}$ \\
				$9$   & $1918$ & $1.153 \cdot 10^{-3}$ & $1000$ & $1.935 \cdot 10^{-3}$ \\
				$10$  & $140$ & $1.063 \cdot 10^{-3}$ & $1000$ & $9.930 \cdot 10^{-4}$ \\
				$11$  & $3617$ & $8.900 \cdot 10^{-4}$ & $1000$ & $2.721 \cdot 10^{-4}$
			\end{tabular}%
		}%
		\hfill%
		\new{%
			\begin{tabular}{rr|rrr} 
				\multicolumn{1}{c}{$k$} & \multicolumn{1}{c|}{$R_k$} & \multicolumn{1}{c}{$\| \widehat{\vV} \|_{H^1}$}  & $\mu_k$ & \multicolumn{1}{c}{$H_k$} \\
				\hline
				$1$    & $5$   & $7.905 \cdot 10^{-3}$ & $10$ & $3.748 \cdot 10^{-2}$ \\
				$2$   & $6$  & $2.209 \cdot 10^{-2}$ & $10$ & $3.388 \cdot 10^{-2}$ \\
				$3$   & $26$  & $1.452 \cdot 10^{-2}$ & $10$ & $2.948 \cdot 10^{-2}$ \\
				$4$   & $20$  & $1.921 \cdot 10^{-2}$ & $100$ & $2.407 \cdot 10^{-2}$ \\
				$5$   & $120$  & $1.731 \cdot 10^{-2}$ & $100$ & $6.487 \cdot 10^{-3}$ \\
				$6$   & $65$  & $2.583 \cdot 10 ^{-3}$ & $100$ & $4.968 \cdot 10^{-3}$ \\
				$7$   & $97$ & $4.345 \cdot 10^{-3}$ & $100$ & $4.856 \cdot 10^{-3}$ \\
				$8$   & $40$ & $3.236 \cdot 10^{-3}$ & $1000$ & $2.424 \cdot 10^{-3}$ \\
				$9$   & $999$ & $1.339 \cdot 10^{-3}$ & $1000$ & $1.064 \cdot 10^{-3}$ \\
				$10$  & $2174$ & $7.278 \cdot 10^{-4}$ & $1000$ & $1.689 \cdot 10^{-4}$ \\
				$11$  & $7208$ & $8.380 \cdot 10^{-4}$ & $1000$ & $1.399 \cdot 10^{-4}$
			\end{tabular}%
		}%
		\begin{center}
			Seed $964113$ (left) and seed $454612$ (right)
		\end{center}
		\vspace*{.4cm}
		\new{%
			\begin{tabular}{rr|rrr} 
				\multicolumn{1}{c}{$k$} & \multicolumn{1}{c|}{$R_k$} & \multicolumn{1}{c}{$\| \widehat{\vV} \|_{H^1}$}  & $\mu_k$ & \multicolumn{1}{c}{$H_k$} \\
				\hline
				$1$    & $6$   & $6.527 \cdot 10^{-3}$ & $10$ & $3.691 \cdot 10^{-2}$ \\
				$2$   & $13$  & $4.974 \cdot 10^{-3}$ & $10$ & $2.983 \cdot 10^{-2}$ \\
				$3$   & $19$  & $3.395 \cdot 10^{-3}$ & $10$ & $2.039 \cdot 10^{-2}$ \\
				$4$   & $37$  & $2.560 \cdot 10^{-3}$ & $10$ & $6.573 \cdot 10^{-3}$ \\
				$5$   & $66$  & $1.556 \cdot 10^{-3}$ & $10$ & $3.504 \cdot 10^{-3}$ \\
				$6$   & $51$  & $2.132 \cdot 10 ^{-2}$ & $10$ & $5.394 \cdot 10^{-3}$ \\
				$7$   & $228$ & $4.940 \cdot 10^{-4}$ & $100$ & $1.295 \cdot 10^{-3}$ \\
				$8$   & $939$ & $6.495 \cdot 10^{-4}$ & $100$ & $8.305 \cdot 10^{-4}$ \\
				$9$   & $1828$ & $8.367 \cdot 10^{-4}$ & $100$ & $6.427 \cdot 10^{-4}$ \\
				$10$  & $2321$ & $9.144 \cdot 10^{-4}$ & $100$ & $4.940 \cdot 10^{-4}$ \\
				$11$  & $4299$ & $3.715 \cdot 10^{-4}$ & $100$ & $3.239 \cdot 10^{-4}$
			\end{tabular}%
		}%
		\hfill%
		\new{%
			\begin{tabular}{rr|rrr} 
				\multicolumn{1}{c}{$k$} & \multicolumn{1}{c|}{$R_k$} & \multicolumn{1}{c}{$\| \widehat{\vV} \|_{H^1}$}  & $\mu_k$ & \multicolumn{1}{c}{$H_k$} \\
				\hline
				$1$    & $1$   & $1.001 \cdot 10^{-2}$ & $10$ & $3.968 \cdot 10^{-2}$ \\
				$2$   & $4$  & $1.127 \cdot 10^{-2}$ & $10$ & $3.711 \cdot 10^{-2}$ \\
				$3$   & $26$  & $1.571 \cdot 10^{-2}$ & $10$ & $3.248 \cdot 10^{-2}$ \\
				$4$   & $54$  & $1.782 \cdot 10^{-2}$ & $100$ & $1.763 \cdot 10^{-2}$ \\
				$5$   & $10$  & $1.855 \cdot 10^{-2}$ & $100$ & $1.440 \cdot 10^{-2}$ \\
				$6$   & $235$  & $1.795 \cdot 10 ^{-2}$ & $100$ & $7.908 \cdot 10^{-3}$ \\
				$7$   & $27$ & $1.508 \cdot 10^{-3}$ & $100$ & $5.186 \cdot 10^{-3}$ \\
				$8$   & $576$ & $7.635 \cdot 10^{-4}$ & $100$ & $4.502 \cdot 10^{-3}$ \\
				$9$   & $593$ & $8.051 \cdot 10^{-4}$ & $100$ & $4.001 \cdot 10^{-3}$ \\
				$10$  & $182$ & $8.117 \cdot 10^{-4}$ & $100$ & $3.520 \cdot 10^{-3}$ \\
				$11$  & $8014$ & $8.577 \cdot 10^{-4}$ & $100$ & $2.851 \cdot 10^{-3}$
			\end{tabular}%
		}%
		\begin{center}
			Seed $421507$ (left) and seed $107785$ (right)
		\end{center}
		\label{tab:results}%
	\end{table}%
	
	\begin{figure}[tbp]
		\centering
		\setlength\figureheight{5cm}%
		\setlength\figurewidth{.46\textwidth}%
		\includegraphics{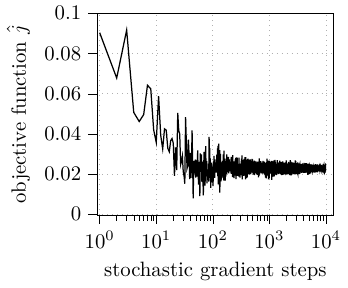}%
		\includegraphics{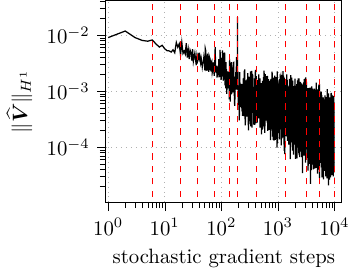}%
		\caption{Objective functional (left) and $H^1$ norm of the mesh deformation (right) as a function of cumulative stochastic gradient steps \new{using seed $421507$, $c_1=4$ and $c_2=\frac{1}{2}$}. The changes of augmented Lagrange parameters are indicated with a red, dashed, vertical line.}
		\label{fig:NumericResults_objectiveFunctional_and_meshDeformNorm}
	\end{figure}
	
	We provide the numerical results at the end of each inner loop \new{for different seeds, $c_1$ and $c_2$} in \new{Tables~\ref{tab:results} and~\ref{tab:results_higher_N_k_m_k}. Here, we present the number of iterations until random stopping~$R_k$, the $H^1$ norm of the mesh deformation for each $k$, which is estimated using the seed $883134$ and a (larger) sample size of $m=10024$ as $\widehat{\vV}= \frac{1}{m} \sum_{i=1}^m \vV_{{\vec{\xi}}^i}$, and the infeasibility measure~$H_k$. Different seeds (Table~\ref{tab:results}) behave differently regrading mesh deformation norm estimate, penalty factor and infeasibility, however the mesh deformation norm estimate and the infeasibility measure were overall reduced by orders of magnitude. We attribute the increases in these values in between to the effect of the randomness on the stochastic gradient. Using larger batch sizes (Table~\ref{tab:results_higher_N_k_m_k}, left) yielded lower mesh deformation norms at a significantly increased computational cost, indicating a very strong influence of the randomness on the objective functional that can be reduced by larger sample sizes, cf. also Figure~\ref{fig:NumericResults_objectiveFunctional_and_meshDeformNorm}. An increased iteration limit~$N_k$ (Table~\ref{tab:results_higher_N_k_m_k}, right) did not seem to improve the result, which is expected due to the strong influence of the randomness on the objective functional.}

	\begin{table}[tbp]
		\caption{\new{Estimate of the norm of the mesh deformation~$\widehat{\vV}$ with a batch size of $m=10024$ and value of the infeasibility measure~$H_k$ at the end of each inner loop using seed $421507$ and different $c_1$ and $c_2$.}}
		\small
		\centering
		\setlength\tabcolsep{3pt}
		\renewcommand{\arraystretch}{1.25}
		\new{%
			\begin{tabular}{rr|rrr}   
				\multicolumn{1}{c}{$k$} & \multicolumn{1}{c|}{$R_k$} & \multicolumn{1}{c}{$\| \widehat{\vV} \|_{H^1}$}  & $\mu_k$ & \multicolumn{1}{c}{$H_k$} \\
				\hline
				$1$   & $39$   & $2.719 \cdot 10^{-3}$ & $10$ & $2.619 \cdot 10^{-2}$ \\
				$2$   & $86$  & $2.432 \cdot 10^{-3}$ & $10$ & $1.104 \cdot 10^{-2}$ \\
				$3$   & $151$  & $8.049 \cdot 10^{-3}$ & $10$ & $7.922 \cdot 10^{-3}$ \\
				$4$   & $324$  & $1.516 \cdot 10^{-3}$ & $10$ & $3.861 \cdot 10^{-3}$ \\
				$5$   & $259$  & $2.472 \cdot 10^{-3}$ & $10$ & $4.508 \cdot 10^{-3}$ \\
				$6$   & $157$  & $9.430 \cdot 10 ^{-4}$ & $100$ & $6.463 \cdot 10^{-4}$ \\
				$7$   & $3085$ & $1.668 \cdot 10^{-3}$ & $100$ & $3.163 \cdot 10^{-4}$ \\
				$8$   & $512$ & $8.117 \cdot 10^{-4}$ & $100$ & $2.570 \cdot 10^{-4}$ \\
				$9$   & $8894$ & $4.183 \cdot 10^{-4}$ & $100$ & $1.232 \cdot 10^{-4}$ \\
				$10$  & $6623$ & $8.896 \cdot 10^{-3}$ & $100$ & $1.984 \cdot 10^{-4}$ \\
				$11$  & $11021$ & $7.836 \cdot 10^{-4}$ & $1000$ & $1.950 \cdot 10^{-5}$
			\end{tabular}%
		}%
		\hfill%
		\new{%
			\begin{tabular}{rr|rrr}   
				\multicolumn{1}{c}{$k$} & \multicolumn{1}{c|}{$R_k$} & \multicolumn{1}{c}{$\| \widehat{\vV} \|_{H^1}$}  & $\mu_k$ & \multicolumn{1}{c}{$H_k$} \\
				\hline
				$1$   & $6$   & $6.486 \cdot 10^{-3}$ & $10$ & $3.702 \cdot 10^{-2}$ \\
				$2$   & $13$  & $5.522 \cdot 10^{-3}$ & $10$ & $3.027 \cdot 10^{-2}$ \\
				$3$   & $30$  & $2.435 \cdot 10^{-3}$ & $10$ & $1.618 \cdot 10^{-2}$ \\
				$4$   & $7$  & $2.519 \cdot 10^{-3}$ & $10$ & $1.310 \cdot 10^{-2}$ \\
				$5$   & $76$  & $2.721 \cdot 10^{-3}$ & $10$ & $6.572 \cdot 10^{-3}$ \\
				$6$   & $31$  & $3.534 \cdot 10 ^{-2}$ & $10$ & $1.791 \cdot 10^{-3}$ \\
				$7$   & $152$ & $1.129 \cdot 10^{-4}$ & $10$ & $1.717 \cdot 10^{-3}$ \\
				$8$   & $678$ & $6.144 \cdot 10^{-4}$ & $100$ & $1.229 \cdot 10^{-3}$ \\
				$9$   & $1383$ & $1.978 \cdot 10^{-4}$ & $100$ & $8.335 \cdot 10^{-4}$ \\
				$10$  & $2662$ & $2.120 \cdot 10^{-4}$ & $100$ & $5.980 \cdot 10^{-4}$ \\
				$11$  & $6893$ & $1.691 \cdot 10^{-4}$ & $100$ & $4.393 \cdot 10^{-4}$
			\end{tabular}%
			\begin{center}
				$c_1=25$, $c_2=\frac{1}{2}$ (left) and $c_1=4$, $c_2=5$ (right)
			\end{center}
		}%
		\label{tab:results_higher_N_k_m_k}%
	\end{table}%

	\new{As an additional numerical experiment, we investigated the influence of the choice of $B$ for the projection of Lagrange multipliers. Instead of $B=[-100,100]^{2N}$ we chose $B=[-0.1,0.1]^{2N}$. The results are provided in Table~\ref{tab:results_smaller_ball}. The batch size and maximum number of inner loop iterations match those in Table~\ref{tab:results}, bottom left. Therefore, the random samples were exactly the same in both cases, but the optimization problem changes since the Lagrange multipliers are different. We did not see any notable improvement in performance by choosing the smaller set. The numerical results indicate that the infeasibility measure is reduced more slowly while the norm of the mesh deformation decreases slightly more rapidly. For higher $k$, we observed a stronger influence of the uncertainty on the mesh deformation norm.}
	
	\begin{table}[tbp]
		\caption{\new{Estimate of the norm of the mesh deformation~$\widehat{\vV}$ with a batch size of $m=10024$ and value of the infeasibility measure~$H_k$ at the end of each inner loop using the smaller ball $B=\left[ -0.1, 0.1 \right]^{2N}$ for the projection of Lagrange multipliers, seed $421507$, $c_1=4$ and $c_2=\frac{1}{2}$.}}
		\small
		\centering
		\setlength\tabcolsep{3pt}
		\renewcommand{\arraystretch}{1.25}
		\new{%
			\begin{tabular}{rr|rrr}   
				\multicolumn{1}{c}{$k$} & \multicolumn{1}{c|}{$R_k$} & \multicolumn{1}{c}{$\| \widehat{\vV} \|_{H^1}$}  & $\mu_k$ & \multicolumn{1}{c}{$H_k$} \\
				\hline
				$1$   & $6$   & $6.363 \cdot 10^{-3}$ & $10$ & $3.691 \cdot 10^{-2}$ \\
				$2$   & $13$  & $4.579 \cdot 10^{-3}$ & $10$ & $3.066 \cdot 10^{-2}$ \\
				$3$   & $19$  & $2.703 \cdot 10^{-3}$ & $10$ & $2.477 \cdot 10^{-2}$ \\
				$4$   & $37$  & $2.126 \cdot 10^{-3}$ & $10$ & $1.779 \cdot 10^{-2}$ \\
				$5$   & $66$  & $6.739 \cdot 10^{-4}$ & $10$ & $1.125 \cdot 10^{-2}$ \\
				$6$   & $51$  & $9.022 \cdot 10 ^{-4}$ & $10$ & $8.407 \cdot 10^{-3}$ \\
				$7$   & $228$ & $2.271 \cdot 10^{-4}$ & $10$ & $4.805 \cdot 10^{-3}$ \\
				$8$   & $939$ & $5.354 \cdot 10^{-4}$ & $10$ & $4.314 \cdot 10^{-3}$ \\
				$9$   & $1828$ & $1.611 \cdot 10^{-3}$ & $10$ & $4.583 \cdot 10^{-3}$ \\
				$10$  & $2321$ & $7.424 \cdot 10^{-4}$ & $100$ & $1.490 \cdot 10^{-3}$ \\
				$11$  & $4299$ & $4.412 \cdot 10^{-3}$ & $100$ & $1.480 \cdot 10^{-3}$
			\end{tabular}%
		}%
		\label{tab:results_smaller_ball}%
	\end{table}%

	\new{
		\section{Conclusion}
		\label{sec:conclusion}
		In this paper, we introduced a novel method for solving constrained optimization problems under uncertainty, where the optimization variable belongs to a Riemannian (shape) manifold. The objective function is formulated as an expectation and the constraints are deterministic. Our work is motivated by applications in PDE-constrained shape optimization, where uncertainty enters the problem in the form of a random PDE, and geometric constraints are introduced to avoid trivial solutions. The optimization variable---the shape---is understood as an element of a Riemannian shape manifold.}
	
	\new{Using the framework of Riemannian manifolds allows us to rigorously prove the convergence of our method, which we call the stochastic augmented Lagrangian method. This algorithm consists of a batch stochastic gradient method with random stopping in an inner loop, combined with an augmented Lagrangian method in an outer loop. The inherently nonconvex character of our underlying application is the reason for introducing random stopping and it allows us to prove convergence rates in expectation even in the absence of convexity. A price that is paid for the guaranteed convergence rates is that the inner loop procedure becomes increasingly expensive. While this is a disadvantage, this still outperforms the standard approach used in sample average approximation, where a one-time sample is taken and the corresponding problem is solved using all samples. The stochastic approximation approach used here dynamically samples over the course of optimization, allowing us to use dramatically fewer samples, especially in the first iterations of the augmented Lagrangian procedure. To our knowledge, our method is the first to solve this kind of shape optimization problem under uncertainty. 
		Since this is quite new, the results of this paper leave space for future research. In particular, there are a few open questions from differential geometry that are outside the scope of the paper but that came up while formulating our theory. It is still unclear whether Assumption \ref{assump:manifold} is satisfied for the manifold used in our application. In particular, we require connectivity and the existence of a bounded injectivity radius of the shape space under consideration. 
	}

\par\addvspace{\bigskipamount}
	
\begin{acknowledgements}
	\noindent \textbf{Acknowledgements} This work has been partly supported by the German Research Foundation (DFG) within the priority program SPP~1962 under contract number WE~6629/1-1 and 
	by the state of Hamburg (Germany) within the Landesforschungsförderung under project ``Simulation-Based Design Optimization of Dynamic Systems Under Uncertainties'' (SENSUS) with project number LFF-GK11. Computational resources (HPC cluster HSUper) have been provided by the
	project hpc.bw, funded by dtec.bw -- Digitalization and Technology Research Center of the Bundeswehr. dtec.bw is funded by the European Union -- NextGenerationEU.
\end{acknowledgements}

\begin{data}
	\noindent \textbf{Data Availability Statement} Most of the data generated or analyzed during this study are included in this
	published article. Any additional information, \new{including the code for simulations}, is available from the corresponding author upon reasonable request.
\end{data}
\bibliographystyle{hplain_custom}
\bibliography{sublibrary.bib}

\end{document}